\title{A Parametrization of D equivalences of coherent sheaves of symplectic resolutions of given symplectic singularity}
\author{D. Boger}
\date{}
	\newcommand{\bs}{\bigskip}
	\definecolor{red}{rgb}{1,0,0}
	\newcommand{\subsubsubsection}[1]{\vspace{0.5cm}
 \textit{\textbf{#1}} \vspace{0.2cm}}
	\newcommand{\g}{\mathfrak{g}}
	\newcommand{\Z}{\mathbb{Z}}
	\newcommand{\D}{\mathcal{D}}
	\newcommand{\F}{\mathcal{F}}
	\newcommand{\A}{\mathcal{A}}
	\newcommand{\R}{\mathbb{R}}
	\newcommand{\CC}{\mathbb{C}}
	\newcommand{\C}{\mathcal{C}}
	\newtheorem{example}{Example}
	\newtheorem{defi}{Definition}
	\newtheorem{claim}{Claim}
	\newtheorem{thm}{Theorem}
	\newtheorem{prop}{Proposition}
	\newtheorem{remark}{Remark}
	\newtheorem{lemma}{Lemma}
\newtheorem{observation}{Observation}
\begin{document}
\onehalfspacing
\maketitle
			
\newcommand{\Dl}{\mathcal{D_{\lambda}}}
\newcommand{\Aop}{\A_F^-}
\newcommand{\Dle}{\mathcal{D}_{\lambda,e}}
\newcommand{\ul}{u(\g)_{\lambda}}
\newcommand{\Al}{\Gamma(\Dl(G/P))}

\begin{abstract}

Let Y be a symplectic singularity over an algebraically closed field k. Let $X^{(i)}$ be its different symplectic resolutions. There is a question of understanding natural equivalence functors between the bounded derived categories of coherent sheaves $D^b(Coh(X^{(i)})), D^b(Coh(X^{(j)}))$. 
Let G be a reductive group over k. Consider associated parabolic subgroups $P^{(i)}$. Consider the varieties $X^{(i)}:=T^*(G/P^{(i)})$. By refining the construction from \cite{B1}, we construct a local system of categories that captures the natural equivalences between the categories. To each symplectic resolution $X^{(i)}$, there is a point $pt_{X^{(i)}}$ in $V^0_{\CC}$, for which the local system attaches the category $D^b(Coh(X^{(i)}))$. To each path between $pt_{X^{(i)}}$ and $pt_{X^{(j)}}$ the local system attaches a natural functor $D^b(Coh(X^{(i)}))\rightarrow D^b(Coh(X^{(j)}))$. This is a picture where the homotopy classes of paths between the points $pt_{X^{(i)}}, pt_{X^{(j)}}$ in $V^0_{\CC}$ parametrize natural equivalences between the corresponding bounded derived categories of coherent sheaves. At first we work with k of characteristic $p>>0$, and construct the local system for the categories $D^b(Coh_0(X^{(i)}))$. Then we extend it to the categories $D^b(Coh(X^{(i)}))$ and lift to characteristic zero.  

\end{abstract}

\tableofcontents
\bs

\bs

I'm very thankful to Prof. Bezrukavnikov, for his great help and support.

\section{Introduction}
\subsection*{The D equivalence question}
Let F be an algebraically closed field. 
Let $X^{(1)},X^{(2)}$ be two smooth projective varieties over F.

\begin{defi}
$X^{(1)}, X^{(2)}$ are \emph{K equivalent} if there is a smooth projective variety, Z, and birational correspondence $X^{(1)}\leftarrow Z\rightarrow X^{(2)}$, such that the pullbacks of the canonical divisors to Z, are linear equivalent.
\end{defi}

 A conjecture by Kawamata \cite{Ka} suggests that K equivalence of the varieties, implies an equivalence of the bounded derived categories of coherent sheaves, as triangulated categories. It's not expected that there will be a construction of a canonical equivalence. When the conjecture holds, one can further study the family of natural equivalences between the categories.

One case where the conjecture holds is the following. Let Y a symplectic singularity over F, Let $X^{(i)}$ be its different symplectic resolutions. These varieties are K equivalent, (the canonical divisor of a symplectic resolution is trivial). Kaledin proved \cite{Kal1} \cite{BK}, that the categories $D^b(Coh(X^{(i)}))$ are equivalent. His construction of the equivalence was non canonical. It involved a choice of a tilting generator for the category, which is a very non canonical object. We are interested in studying the family of natural equivalences between $D^b(Coh(X^{(i)})), D^b(Coh(X^{(j)}))$.

We specialize to the case that was described in the abstract and suggest a local system that captures the natural equivalences, as described in the abstract.

\section{Background}
				This section sets notations that will be used. It's a short version of the background section from \cite{B1}.
				
				\bs
				Let k be an algebraically closed field of characteristic $p>>0$. Let G be a reductive group over k. Let $\g$ be its Lie algebra. Let $\mathcal{B}$ be the variety of Borel subalgebras in $\g$. Let $\mathcal{N}\subset \g$ be the nilpotent cone. Let $T^*\mathcal{B}\rightarrow \mathcal{N}$ be the springer resolution.  
				
				Let T be a maximal torus. Let $\Lambda:=Hom(T,\mathbb{G}_m)$. Let $P$ be a parabolic subgroup s.t $T\subset P\subset G$. Let L be its Levi subgroup. Let $\Lambda_L:=Hom(L,\mathbb{G}_m)\subset \Lambda$. Let W be the weyl group. Let $T^*G/P\rightarrow \mathcal{N}_L$ be the parabolic springer map. $\mathcal{N}_L\subset \mathcal{N}$.
				
				\begin{defi} $\lambda\in \Lambda$ is called p-regular, if the stabilizer in W of $\lambda+p\Lambda\in \Lambda/p\Lambda$ is trivial. 
				\end{defi}

			\subsection*{D modules and Coherent sheaves in characteristic p}
			
					Let k be a perfect field of characteristic $p>>0$. Let X be a smooth variety over k. 
					
					Let $\mathcal{D}_X$ be the sheaf of crystalline differential operators on X. 
					\bs
					
					Twisted differential operators. Let $\mathcal{L}\in Pic(X)$ be a line bundle. Then the sheaf of $\mathcal{L}$ twisted differential operators  $\mathcal{D}_X^{\mathcal{L}}$ is well defined. 
					\bs
					
					Notation: Let $X:=G/B$ let $\mathcal{D}_{\lambda}:=\mathcal{D}^{\mathcal{L}}$, where $\mathcal{L}$ corresponds to $\lambda$ under the equivalence $Pic(G/B)\simeq \Lambda$. Similarly for $X:=G/P$ where $Pic(G/P)\simeq \Lambda_L$
				\bs
				
				Notation: Let $\mathcal{D}_X^{\mathcal{L}}-mod$ be the category of $\mathcal{L}$ twisted D modules on X. (For X:=G/P we also denote it $\mathcal{D}_{X,\lambda}-mod$ or $\mathcal{D}_{\lambda}-mod$ when there is no ambiguity)
					\begin{claim}
					It is still true, as in characteristic 0, that $\mathcal{D}_X$ is a quantization of $O_{T^*X}$. That is, there is a natural filtration on this sheaf and $gr(\mathcal{D}_X)\simeq O_{T^*X}$. 
					\end{claim}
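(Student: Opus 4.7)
The plan is to define the natural order filtration on $\mathcal{D}_X$, verify that $[F^i, F^j] \subseteq F^{i+j-1}$ so that the associated graded is commutative, and then show locally that $gr(\mathcal{D}_X) \simeq Sym_{O_X}(T_X) = O_{T^*X}$.

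First, using that by definition $\mathcal{D}_X$ is generated as an $O_X$-algebra by the tangent sheaf $T_X$, subject to the relations $\xi f - f\xi = \xi(f)$ and $\xi\eta - \eta\xi = [\xi,\eta]$ for $\xi,\eta \in T_X$, $f \in O_X$, I would set $F^0\mathcal{D}_X := O_X$ and inductively $F^{n+1}\mathcal{D}_X := F^n\mathcal{D}_X + T_X \cdot F^n\mathcal{D}_X$. An easy induction from the commutation relations gives $[F^i, F^j] \subseteq F^{i+j-1}$, so $gr(\mathcal{D}_X)$ is commutative. The embedding $T_X \hookrightarrow F^1/F^0$ then extends uniquely to a surjection of graded $O_X$-algebras $\phi \colon Sym_{O_X}(T_X) \twoheadrightarrow gr(\mathcal{D}_X)$.

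To check that $\phi$ is an isomorphism, I would argue locally. Since $X$ is smooth, about every point there exist étale coordinates $x_1, \ldots, x_n$ whose dual derivations $\partial_1, \ldots, \partial_n$ form a free $O_X$-basis of $T_X$. The PBW theorem for the universal enveloping algebroid of a Lie algebroid, whose standard proof uses only the commutation relations and an induction on degree with no division by integers, implies that the ordered monomials $\partial_1^{a_1}\cdots \partial_n^{a_n}$ with $a_i \geq 0$ form an $O_X$-basis of $\mathcal{D}_X$ on this open. Taking symbols yields an $O_X$-basis of $gr(\mathcal{D}_X)$ matching the monomial basis of $Sym_{O_X}(T_X)$, so $\phi$ is a local, hence global, isomorphism. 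The identification $Sym_{O_X}(T_X) \simeq O_{T^*X}$ is the standard description of the cotangent bundle.

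The only real subtlety is verifying that PBW carries over to characteristic $p$. This is precisely why $\mathcal{D}_X$ is taken here to be the \emph{crystalline} differential operators: one forms the enveloping algebroid of $T_X$ directly, so unlike Grothendieck's ring of differential operators (which in characteristic $p$ acquires extra divided-power generators $\partial^{[n]}$), the relations between generators do not involve any denominators, and the standard filtered-to-graded argument applies verbatim. Granting this PBW statement, the rest of the proof is formal.
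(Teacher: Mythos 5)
The paper states this claim purely as background and gives no proof of its own (it is the standard fact recalled from the crystalline differential operator literature, e.g.\ \cite{BMR}), so there is nothing in the source to compare your argument against. Your proof is correct and is the standard one: the order filtration, commutativity of the associated graded via $[F^i,F^j]\subseteq F^{i+j-1}$, and injectivity of $Sym_{O_X}(T_X)\twoheadrightarrow gr(\mathcal{D}_X)$ by the characteristic-free PBW theorem for Lie algebroids with locally free anchor; you also correctly identify the one point that actually matters in characteristic $p$, namely that $\mathcal{D}_X$ is the enveloping algebroid of $T_X$ rather than Grothendieck's ring of divided-power differential operators, which is exactly why the argument goes through verbatim.
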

					
					\begin{thm}
					The center of $\mathcal{D}_X$ is big. It's canonically isomorphic to the sheaf of rings $O_{(T^*X)^{(1)}}$. Here the superscript (1) stands for Frobenius twist. (Remark: For a variety over a perfect field k, of char p, which is defined over $F_p$, the variety and its Frobenius twist are isomorphic as k schemes, hence we usually omit the superscript from the notation and identify $T^*X^{(1)}\simeq T^*X$)
					\end{thm}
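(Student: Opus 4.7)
The plan is to construct a canonical sheaf map $\varphi: O_{(T^*X)^{(1)}} \to Z(\mathcal{D}_X)$ and then verify, by a local computation using étale coordinates, that it is an isomorphism.

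To define $\varphi$, I would use the two natural sources of central elements in characteristic $p$. Observe that $O_{T^*X}$ is generated (as a sheaf of $O_X$-algebras) by $T_X$, so $O_{(T^*X)^{(1)}}$ is correspondingly generated by the Frobenius-twisted $O_{X^{(1)}}$ and $T_{X^{(1)}}$. On functions, I would send the image of $f \in O_X$ under Frobenius to $f^p \in \mathcal{D}_X$; this is central because any derivation $\xi$ satisfies $\xi(f^p) = p f^{p-1}\xi(f) = 0$. On vector fields, I would send a vector field $\xi \in T_X$ (regarded as an element of $T_{X^{(1)}}$ via Frobenius) to $\xi^p - \xi^{[p]} \in \mathcal{D}_X$, where $\xi^{[p]}$ is the canonical restricted Lie algebra $p$-operation on vector fields in characteristic $p$. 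Centrality of $\xi^p - \xi^{[p]}$ follows from two facts: its commutator with any function $f$ is $\xi^p(f) - \xi^{[p]}(f) = 0$ by definition of the restricted structure, and its commutator with any vector field $\eta$ is $0$ by the Jacobson identity $\mathrm{ad}(\xi^{[p]}) = \mathrm{ad}(\xi)^p$ in restricted Lie algebras. These two rules are canonical (coordinate independent), and since the image is commutative, they assemble into an algebra map from the symmetric algebra $O_{(T^*X)^{(1)}} = \mathrm{Sym}_{O_{X^{(1)}}} T_{X^{(1)}}$.

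To show $\varphi$ is an isomorphism I would work locally in étale coordinates $x_1,\dots,x_n$ with dual derivations $\partial_1,\dots,\partial_n$; in such coordinates $\partial_i^{[p]} = 0$, so $\varphi$ sends $x_i^{(1)} \mapsto x_i^p$ and $\partial_i^{(1)} \mapsto \partial_i^p$. Using the PBW basis $\{x^I \partial^J\}$ of $\mathcal{D}_X$ compatible with the filtration from the first claim, one sees that $\mathcal{D}_X$ is a free module of rank $p^{2n}$ over the image of $\varphi$, and that $\varphi$ itself is injective because the monomials $x^{pI}\partial^{pJ}$ are $O_{(T^*X)^{(1)}}$-linearly independent. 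Passing to the associated graded of the commutator filtration and comparing with the fact that $gr(\mathcal{D}_X) = O_{T^*X}$ is commutative, one shows that any central element must have its leading symbol a $p$-th power, which by induction on filtration degree forces it into the image of $\varphi$.

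The main obstacle I expect is ensuring canonicity and surjectivity together: writing down $\varphi$ on generators is easy, but verifying that $\varphi$ exhausts the center (rather than landing in a strictly smaller central subalgebra) is where the argument bites. The cleanest way to close this gap is to upgrade the rank-$p^{2n}$ freeness into the statement that $\mathcal{D}_X$ is an Azumaya algebra over $\mathrm{Im}(\varphi)$; any strict enlargement of the center would contradict the rank count, hence $\mathrm{Im}(\varphi) = Z(\mathcal{D}_X)$. An alternative route, which avoids Azumaya language, is to carry out the filtration-degree induction explicitly: if $z \in Z(\mathcal{D}_X)$ has symbol $\sigma(z) \in O_{T^*X}$, then $[z, f] = 0$ and $[z, \xi] = 0$ for all $f, \xi$ force $\sigma(z)$ to be annihilated by all Hamiltonian vector fields of such $f, \xi$, which in characteristic $p$ means $\sigma(z) \in O_{(T^*X)^{(1)}}$; subtracting a preimage under $\varphi$ drops the filtration degree and we iterate.
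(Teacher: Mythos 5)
The paper itself gives no proof of this statement: it is recalled as background from the localization literature (Bezrukavnikov--Mirkovi\'c--Rumynin, \cite{BMR}), so there is no in-paper argument to compare against. Your proposal reconstructs the standard proof from that source correctly: the central elements $f^p$ and $\xi^p-\xi^{[p]}$, centrality via the restricted Lie algebra axioms, the local \'etale-coordinate computation showing $\mathcal{D}_X$ is free of rank $p^{2n}$ over the image, and the symbol/filtration induction (or the Azumaya rank count) to show the image exhausts the center. The one step you gloss over is why the two assignments ``assemble into an algebra map from the symmetric algebra'': for the map to be well defined on $\mathrm{Sym}_{O_{X^{(1)}}}T_{X^{(1)}}$ you need $\xi\mapsto \xi^p-\xi^{[p]}$ to be not just additive (Jacobson's formula, which you implicitly use) but also Frobenius-semilinear over $O_X$, i.e.\ $(f\xi)^p-(f\xi)^{[p]}=f^p(\xi^p-\xi^{[p]})$. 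This is Hochschild's identity; it is true and standard, but it is a genuine computation rather than a formality, and it is the only missing ingredient in an otherwise complete and correct argument.
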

					
					\begin{thm}
							$\mathcal{D}_X$, and $\mathcal{D}_X^{\mathcal{L}}$ are both Azumaya algebras on $T^*X^{(1)}$.
					\end{thm}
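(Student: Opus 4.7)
The plan is to verify the two defining properties of an Azumaya algebra on the scheme $T^*X^{(1)}$: that $\mathcal{D}_X$ is a coherent, locally free $O_{T^*X^{(1)}}$-module, and that the fiber at every closed point of $T^*X^{(1)}$ is a matrix algebra over the residue field. Both properties are étale-local on $T^*X^{(1)}$, so I will eventually reduce to an affine model.

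First I would establish local freeness of the correct rank $p^{2\dim X}$. The filtration supplied by the preceding claim gives $\mathrm{gr}(\mathcal{D}_X)\simeq O_{T^*X}$, and the relative Frobenius $T^*X\to T^*X^{(1)}$ is a finite flat morphism of rank $p^{2\dim X}$ because $X$ is smooth of dimension $\dim X$ (and therefore $T^*X$ is smooth of dimension $2\dim X$). Hence $O_{T^*X}$ is locally free of this rank over $O_{T^*X^{(1)}}$. A standard argument—if the associated graded of a filtered module over a filtered ring is locally free, then the module itself is locally free of the same rank—then transfers this to $\mathcal{D}_X$ viewed as a module over its Frobenius center $O_{T^*X^{(1)}}$.

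Next I would verify that each fiber is a matrix algebra. Given a closed point $\xi\in T^*X^{(1)}$ lying over $x\in X^{(1)}$, pick étale coordinates $y_1,\dots,y_n$ on an étale neighborhood $U\to X$ around the image point of $x$ under Frobenius; these induce an étale map $T^*U\to T^*\mathbb{A}^n$ compatible with Frobenius twists. Since formation of $\mathcal{D}$ and its center is étale-local, this reduces the fiber computation to the affine case $X=\mathbb{A}^n$, where $\mathcal{D}_X$ is the crystalline Weyl algebra generated by $y_i,\partial_i$ with $[\partial_i,y_j]=\delta_{ij}$ and with Frobenius center $k[y_i^p,\partial_i^p]$. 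The fiber at $\xi=(a_i,b_i)$ is then the $k$-algebra with generators $y_i,\partial_i$ subject to the Weyl relations and $y_i^p=a_i,\ \partial_i^p=b_i$, which is a well-known $p^{2n}$-dimensional central simple algebra isomorphic to $M_{p^n}(k)$ (one can see this directly by exhibiting an irreducible module of dimension $p^n$, or by noting that it is a tensor product of $n$ copies of $M_p(k)$). Combined with local freeness of rank $p^{2n}$, this shows $\mathcal{D}_X$ is Azumaya.

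For the twisted version $\mathcal{D}_X^{\mathcal{L}}$, I would observe that étale-locally on $X$ the line bundle $\mathcal{L}$ is trivial, so étale-locally on $X^{(1)}$ (and hence on $T^*X^{(1)}$) one has $\mathcal{D}_X^{\mathcal{L}}\simeq \mathcal{D}_X$. Since being Azumaya is étale-local, the result transfers. The main obstacle is really the fiber computation for the untwisted Weyl algebra in characteristic $p$; the filtration argument and the étale reduction are essentially formal once one accepts that the construction of $\mathcal{D}_X$ and of its central subalgebra $O_{T^*X^{(1)}}$ are étale-local on $X$.
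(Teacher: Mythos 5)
The paper gives no proof of this theorem; it is quoted as background and is Theorem 2.2.3 of \cite{BMR}. Your argument --- local freeness of rank $p^{2\dim X}$ over the Frobenius center via the order filtration together with finite flatness of Frobenius, \'etale reduction of the fiber computation to the reduced Weyl algebra $k\langle y_i,\partial_i\rangle/\bigl(y_i^p-a_i,\ \partial_i^p-b_i\bigr)\simeq M_{p^n}(k)$, and local triviality of $\mathcal{L}$ for the twisted case --- is precisely the standard proof from that reference and is correct.
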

					\begin{claim}
						$\mathcal{D}_X^{\mathcal{L}}$ and $\mathcal{D}_X$ are Morita equivalent. There is a canonical equivalence between the category of $\mathcal{D}_X$ modules and the category of $\mathcal{D}_X^{\mathcal{L}}$ modules. 
					\end{claim}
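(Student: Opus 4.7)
The plan is to build the Morita equivalence explicitly using $\mathcal{L}$ itself as the invertible bimodule. The starting point is a very local observation: by the definition of twisted differential operators (or by picking a local trivialization of $\mathcal{L}$ and comparing formulas), there is a canonical isomorphism of sheaves of algebras on $X$
$$\mathcal{D}_X^{\mathcal{L}} \;\simeq\; \mathcal{L} \otimes_{\mathcal{O}_X} \mathcal{D}_X \otimes_{\mathcal{O}_X} \mathcal{L}^{-1}.$$
Locally, where $\mathcal{L}$ is trivialized by a generator $s$, conjugation $d \mapsto s \otimes d \otimes s^{-1}$ realizes this isomorphism, and patching is canonical because the twist is built precisely from the transition functions of $\mathcal{L}$. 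This makes $\mathcal{L}$ into a $(\mathcal{D}_X^{\mathcal{L}},\mathcal{D}_X)$-bimodule and $\mathcal{L}^{-1}$ into a $(\mathcal{D}_X,\mathcal{D}_X^{\mathcal{L}})$-bimodule in a canonical way.

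Next I would define the two functors
$$\mathcal{D}_X\text{-mod}\;\longrightarrow\;\mathcal{D}_X^{\mathcal{L}}\text{-mod},\qquad M \longmapsto \mathcal{L}\otimes_{\mathcal{O}_X} M,$$
$$\mathcal{D}_X^{\mathcal{L}}\text{-mod}\;\longrightarrow\;\mathcal{D}_X\text{-mod},\qquad N \longmapsto \mathcal{L}^{-1}\otimes_{\mathcal{O}_X} N,$$
and check that they are mutually inverse. The unit and counit are induced by the canonical isomorphisms $\mathcal{L}\otimes \mathcal{L}^{-1} \simeq \mathcal{O}_X \simeq \mathcal{L}^{-1}\otimes \mathcal{L}$; since $\mathcal{L}$ is locally free of rank one, tensoring with $\mathcal{L}$ is exact and preserves the property of being quasi-coherent over $\mathcal{O}_X$, so the natural transformations are isomorphisms.

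The step I would flag as needing most care is the compatibility with the Azumaya-algebra-on-$T^*X^{(1)}$ picture provided by the earlier theorem. To legitimately call this a Morita equivalence of Azumaya algebras on $T^*X^{(1)}$ (and not merely a Morita equivalence of sheaves of algebras on $X$), one must verify that the two actions of the common center $\mathcal{O}_{T^*X^{(1)}}$ on $\mathcal{L}$ -- one coming from $\mathcal{D}_X$, the other from $\mathcal{D}_X^{\mathcal{L}}$ -- coincide. This is where the Frobenius plays a role: the center is generated by $p$-curvatures, and the $p$-curvatures of $\mathcal{D}_X$ and $\mathcal{D}_X^{\mathcal{L}}$ differ by a term that becomes trivial after conjugation by a section of $\mathcal{L}$, because a line bundle is locally trivial. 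Consequently $\mathcal{L}$ descends to an invertible $(\mathcal{D}_X^{\mathcal{L}},\mathcal{D}_X)$-bimodule on $T^*X^{(1)}$, establishing the Morita equivalence.

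With this in hand, the equivalence of module categories follows from standard Morita theory, and by construction it depends on $\mathcal{L}$ but not on any auxiliary choices -- this is the sense in which the equivalence is canonical.
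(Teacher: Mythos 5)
The paper itself states this claim as background without proof (it is imported from the references \cite{BMR}, \cite{B1}), so there is no in-text argument to compare against; judged on its own, your proposal is the standard argument and is essentially correct. The identification $\mathcal{D}_X^{\mathcal{L}} \simeq \mathcal{L}\otimes_{\mathcal{O}_X}\mathcal{D}_X\otimes_{\mathcal{O}_X}\mathcal{L}^{-1}$, the bimodule structure on $\mathcal{L}\otimes_{\mathcal{O}_X}\mathcal{D}_X$, and the mutually inverse functors $M\mapsto \mathcal{L}\otimes_{\mathcal{O}_X} M$ and $N\mapsto \mathcal{L}^{-1}\otimes_{\mathcal{O}_X} N$ are exactly how this equivalence is set up in the literature, and you correctly isolate the step that actually requires work in characteristic $p$: showing that the two actions of the common center $\mathcal{O}_{T^*X^{(1)}}$ agree, so that the equivalence is a Morita equivalence of Azumaya algebras on $T^*X^{(1)}$ and is compatible with the support/central-character decompositions used later in the paper.

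The one place your justification is too weak is precisely that step. You argue that the discrepancy in $p$-curvatures ``becomes trivial after conjugation by a section of $\mathcal{L}$, because a line bundle is locally trivial.'' Local triviality alone cannot be the reason: an arbitrary twisted differential operator ring (e.g.\ $\mathcal{D}^{\lambda}$ for non-integral $\lambda$) is also locally isomorphic to $\mathcal{D}_X$, yet its center is the ring of functions on a \emph{twisted} cotangent bundle, a nontrivial $T^*X^{(1)}$-torsor, and no such Morita equivalence over $T^*X^{(1)}$ exists. What makes the integral case work is the concrete computation for conjugation by an invertible function $g$ (a transition function of $\mathcal{L}$): writing $g\xi g^{-1}=\xi-\xi(g)g^{-1}$, one checks that the central element $\xi^p-\xi^{[p]}$ is preserved, equivalently that the $1$-form $d\log g$ has vanishing $p$-curvature (it is fixed by the Cartier operator). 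This is the point at which the hypothesis that the twist comes from an honest line bundle, rather than a general TDO parameter, enters; your proof should state and use this computation explicitly rather than appeal to local triviality. With that repair the argument is complete, and the resulting equivalence is canonical in $\mathcal{L}$ as you say.
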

						
					\begin{thm}	
						$\mathcal{A}:=\mathcal{D}_X$ doesn't split over $T^*X$, but it does splits on a formal neighborhood of the zero section of $T^*X$. 
										
					The splitting implies the equivalence:
					\begin{equation}
					\label{D-coh}
					\mathcal{D}_X^{\mathcal{L}}-mod_0\simeq Coh_0(T^*X)
					\end{equation}
					Where in both side we restrict to sheaves with support on the formal neighborhood of the zero section $X\subset T^*X$.
					\end{thm}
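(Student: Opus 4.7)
The plan is to prove the three assertions separately: non-splitting of $\mathcal{D}_X$ over $T^*X$, splitting on the formal neighborhood of the zero section, and the resulting equivalence of module categories. Throughout I use the identification of the center of $\mathcal{D}_X$ with $O_{T^*X^{(1)}}$ and the previously stated Morita equivalence between $\mathcal{D}_X$ and $\mathcal{D}_X^{\mathcal{L}}$.

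For non-splitting: if one had $\mathcal{D}_X \simeq \mathcal{E}nd(\mathcal{V})$ for a vector bundle $\mathcal{V}$ on $T^*X^{(1)}$, restricting to a single cotangent fiber $T^*_x X \cong \mathbb{A}^{\dim X}$ would split the corresponding fiberwise Azumaya algebra. That fiberwise algebra is the algebra of constant-coefficient crystalline differential operators on $\mathbb{A}^n$, whose Brauer class is nontrivial — for instance its restriction to the origin is a central simple algebra of Heisenberg type and is a division algebra. So no global splitting can exist.

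The key technical content is the splitting on the formal neighborhood. The natural splitting module on the zero section is $\mathcal{E}_0 := Fr_*O_X$, a locally free sheaf of rank $p^{\dim X}$ on $X^{(1)}$. The action of $\mathcal{D}_X$ on $O_X$ makes $Fr_*O_X$ into a $\mathcal{D}_X$-module whose central support is exactly the zero section, and the induced map $\mathcal{D}_X|_{X^{(1)}} \to \mathcal{E}nd_{O_{X^{(1)}}}(Fr_*O_X)$ is an isomorphism of Azumaya algebras by a local rank calculation (both sides are of rank $p^{2\dim X}$ and the map is surjective locally by Jacobson-density type argument). To extend the splitting from $X^{(1)}$ to the formal neighborhood $\widehat{T^*X}_X$, I would proceed by deformation theory: the obstruction to lifting a splitting from the $n$-th infinitesimal neighborhood of $X^{(1)}$ to the $(n{+}1)$-st lies in $H^2(X^{(1)}, \mathcal{E}nd(\mathcal{E}_0) \otimes \mathrm{Sym}^{n+1}(N^*))$, with $N^*$ the conormal bundle. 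These obstructions vanish because $\mathcal{E}nd(\mathcal{E}_0)$ is a matrix algebra and the Azumaya structure on $\widehat{T^*X}_X$ is already trivialized to first order by $\mathcal{E}_0$. Alternatively one can produce the extension by hand: the PBW-type filtration on $\mathcal{D}_X$ by symbol order gives a compatible tower of extensions of $Fr_*O_X$ to the infinitesimal neighborhoods, yielding in the limit a splitting module $\widetilde{\mathcal{E}}$ on the formal neighborhood.

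Once $\mathcal{D}_X|_{\widehat{T^*X}_X} \simeq \mathcal{E}nd(\widetilde{\mathcal{E}})$ is established, standard Morita theory produces an equivalence
$$\mathcal{H}om_{\mathcal{D}_X}(\widetilde{\mathcal{E}}, -): \; \mathcal{D}_X\text{-mod}_0 \; \xrightarrow{\;\sim\;} \; \mathrm{Coh}_0(T^*X),$$
with quasi-inverse $\mathcal{F} \mapsto \widetilde{\mathcal{E}} \otimes_{O_{\widehat{T^*X}_X}} \mathcal{F}$, where both sides consist of sheaves whose support is contained in the formal neighborhood of the zero section. Composing with the Morita equivalence $\mathcal{D}_X\text{-mod} \simeq \mathcal{D}_X^{\mathcal{L}}\text{-mod}$ of the previous claim yields the asserted equivalence $\mathcal{D}_X^{\mathcal{L}}\text{-mod}_0 \simeq \mathrm{Coh}_0(T^*X)$. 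The main obstacle is the extension step in the second paragraph: abstract obstruction theory gives existence cheaply, but for the downstream applications in this paper one wants a canonical extension, which forces one to carry out the explicit PBW construction and check its compatibility with the Azumaya structure.
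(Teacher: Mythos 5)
This theorem is quoted background: the paper itself gives no proof, citing \cite{BMR}, \cite{BMR2}, where the splitting is established for the varieties actually used here ($X=G/B$, $G/P$). So your proposal has to stand on its own, and it has two genuine problems. The first is the non-splitting argument. Over an algebraically closed field every finite-dimensional central simple algebra is a matrix algebra, so the restriction of the Azumaya algebra $\mathcal{D}_X$ to \emph{any} closed point of $T^*X^{(1)}$ splits; there is no ``division algebra of Heisenberg type'' at the origin of a cotangent fiber. Worse, that point lies on the zero section, where the theorem you are proving asserts the algebra splits --- your argument contradicts the statement. Non-splitting over $T^*X$ is a generic-point or global phenomenon: for $X=\mathbb{A}^1$ the Weyl algebra tensored with the fraction field of its center $k(x^p,\partial^p)$ is a division algebra of degree $p$; alternatively, a global splitting over $\mathbb{A}^{2}$ would exhibit the Weyl algebra, which is a domain because $gr(\mathcal{D}_X)\simeq O_{T^*X}$ is, as a matrix algebra over a commutative ring, which is not a domain.

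The second problem is the extension of the splitting from the zero section to its formal neighborhood, which is the real content of the theorem. The obstruction groups you name, $H^2(X^{(1)},\mathcal{E}nd(Fr_*O_X)\otimes Sym^{n+1}(N^*))$, do not vanish ``because $\mathcal{E}nd(\mathcal{E}_0)$ is a matrix algebra''; that is not a reason, and for an arbitrary smooth $X$ these groups are generally nonzero, so the argument as written does not close (and leaves open whether the statement even holds in the generality in which the paper phrases it). What makes the extension work in the cases the paper needs is the specific geometry of $T^*(G/P)$ --- the vanishing $H^{>0}(G/P, Sym^{\bullet}T)=0$ coming from rationality of singularities of the nilpotent cone --- or, as in \cite{BMR}, a direct construction of a splitting module on the formal neighborhood of the relevant fiber of the moment map. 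The parts of your proposal that are correct and standard are the identification of $Fr_*O_X$ as the splitting bundle on the zero section itself (Cartier descent plus a rank count giving $\mathcal{D}_X\otimes_{O_{T^*X^{(1)}}}O_{X^{(1)}}\simeq\mathcal{E}nd(Fr_*O_X)$) and the Morita-theoretic deduction of $\mathcal{D}_X^{\mathcal{L}}\text{-mod}_0\simeq Coh_0(T^*X)$ once a splitting on the formal neighborhood is in hand.
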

					\begin{defi}
					\label{F D-coh}
					Let $F_{D^{\mathcal{L}},Coh_{T^*X}}$ denote an equivalence functor from the left hand side to the right hand side. In the case that $X:=G/P$ we also denote it by $F_{D_{\lambda},Coh_P}$ or $F_{D,Coh_P}$ when there is no twist.
					\end{defi}

			\subsection*{Localization theorem in characteristic p}
			
			Let $\mathfrak{h}$ be the universal Cartan subalgebra of $\g$. Let $B\subset G$ be any Borel subgroup of G, and $\mathfrak{b}$ be its lie algbera, then there is a canonical isomorphism $\mathfrak{h}\simeq \mathfrak{b}/[\mathfrak{b},\mathfrak{b}]$. Let $T\subset B$ be a maximal torus contained in B. Let $\mathfrak{t}$ be its lie algebra.
			
			Let W be the Weyl group. W acts on the weight lattice $\Lambda$ by the dot action. $w\bullet\lambda:=w(\lambda+\rho)-\rho$.  
						
			Define $\xi$ to be the center of $U(\g)$. The center consists of two parts. The Harish Chandra center $\xi_{HC}:=U(\g)^G\simeq S(\mathfrak{t})^{(W,\bullet)}$, $S(\mathfrak{t})$ is the symmetric algebra of $\mathfrak{t}$. And The Frobenius center (p-center) $\xi_{Fr}\simeq S(\g^{(1)})$. To give a central character of $U(\g)$, we need to give a pair of elements $(\lambda,e)\in \mathfrak{t^*}\times \g^{*(1)}$, which are compatible. In particular lets restrict to the case that e is nilpotent and $\lambda$ an integral weight.

				\bs
				
				Let $\lambda\in \Lambda$. There is a natural map $\Lambda/p\Lambda\rightarrow \mathfrak{h}^*/W$ $\lambda\mapsto d\lambda$. Hence $\lambda$ defines a maximal ideal of $\xi_{HC}$. We define $U(\g)_{\lambda}:=U\otimes_{\xi_{HC}}k$. 
				
				Let $U(\g)_{\lambda}-mod$ be the category of finitely generated $U(\g)_{\lambda}$ modules.

				Given a compatible pair $\lambda\in \Lambda,e\in \g^*$. We consider $U_{\lambda}^{\hat{e}}$-mod. The category of finitely generated $U^{\lambda}$ modules with generalized p character $\lambda$.  We also denote this category by $U_{\lambda}-mod_{e}$.

				\subsubsection*{Derived localization in char p}
						See \cite{BeiBer} for the classical BB localization theorem. The following theorem, proved in \cite{BMR},\cite{BMR2} is an analog in characteristic p.
						
						Let $\lambda\in \Lambda$.
						
						\begin{thm}
						There is a natural isomorphism $\Gamma(\mathcal{D}_{{G/B},\lambda})\simeq U(\g)_{\lambda}$. 
						\end{thm}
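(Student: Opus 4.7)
I plan to construct a natural algebra map $\phi: U(\g) \to \Gamma(\mathcal{D}_{G/B,\lambda})$, verify it factors through $U(\g)_\lambda$, and then show the induced map is an isomorphism by comparing associated gradeds.

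For the construction, the $G$-equivariance of the line bundle $\mathcal{L}_\lambda$ on $G/B$ means that each $X \in \g$ acts by a first-order differential operator on sections of $\mathcal{L}_\lambda$, hence gives a global section of $\mathcal{D}_{G/B,\lambda}$; the universal property of $U(\g)$ extends this to an algebra map $\phi$. To see that $\phi$ annihilates the kernel of the character $\xi_{HC} \to k$ corresponding to $\lambda$, I would restrict to a formal neighborhood of $[B] \in G/B$, where the $B$-action on the fiber of $\mathcal{L}_\lambda$ at $[B]$ is by the character $\lambda$, and then use the standard Harish-Chandra calculation to identify the central character (with the appropriate $\rho$-shift). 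Since a central section of $\mathcal{D}_{G/B,\lambda}$ that acts by a scalar at one point acts by the same scalar on the connected variety $G/B$, this pins down the factorization.

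The main step is showing the induced map $\bar\phi: U(\g)_\lambda \to \Gamma(\mathcal{D}_{G/B,\lambda})$ is an isomorphism. Both sides carry natural filtrations---PBW on the left, order of operator on the right---and on associated gradeds $\bar\phi$ induces a map $S(\g)/(S(\g)^G_+ \cdot S(\g)) \to \Gamma(T^*(G/B),\mathcal{O})$. By Kostant's theorem in characteristic $p \gg 0$ the left side is $\mathcal{O}(\mathcal{N})$. For the right side, the key geometric input is that the Springer resolution $T^*(G/B) \to \mathcal{N}$ satisfies $R\pi_*\mathcal{O}_{T^*(G/B)} = \mathcal{O}_\mathcal{N}$ (rationality of the nilpotent cone, valid for $p$ large); since $\mathcal{N}$ is affine, this forces $H^{>0}(T^*(G/B),\mathcal{O})=0$ and $H^0 = \mathcal{O}(\mathcal{N})$. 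An inductive argument on the order filtration (using the short exact sequences $0 \to \mathcal{D}^{\leq n-1} \to \mathcal{D}^{\leq n} \to \mathrm{Sym}^n T_{G/B} \to 0$ and the vanishing above) then gives $\mathrm{gr}\,\Gamma(\mathcal{D}_{G/B,\lambda}) = \mathcal{O}(\mathcal{N})$, and the induced map on gradeds is identified with the natural isomorphism via the fact that the moment map $T^*(G/B) \to \g^*$ factors through $\mathcal{N}$ as the Springer map.

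The main obstacle is the characteristic-$p$ vanishing $R^i\pi_*\mathcal{O} = 0$ for $i > 0$ on the Springer resolution; once this (together with Kostant's theorem) is granted for $p \gg 0$, a standard filtered-to-graded argument using exhaustivity and boundedness below of both filtrations finishes the proof. The rest is essentially a direct translation of the classical Beilinson--Bernstein argument to the crystalline setting.
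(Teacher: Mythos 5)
The paper offers no proof of this theorem; it is quoted as background from \cite{BMR}, \cite{BMR2}. Your sketch is correct and is essentially the argument given in that reference: build the map from $U(\g)$ via the infinitesimal action on sections of $\mathcal{L}_\lambda$, identify the Harish--Chandra central character by a local computation, and deduce the isomorphism on associated gradeds from Kostant's theorem together with $R\pi_*\mathcal{O}_{T^*(G/B)}=\mathcal{O}_{\mathcal{N}}$ for $p$ large, the only point worth flagging being that one a priori gets only a surjection $S(\g)/(S(\g)^G_+)\twoheadrightarrow \mathrm{gr}\,U(\g)_\lambda$, which your comparison with $\mathcal{O}(\mathcal{N})$ via the moment map does resolve.
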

														
						\begin{thm}
						Let $\lambda\in \Lambda$ be p-regular, then the derived global sections functor $\Gamma^{B}_{\lambda}:D^b(\mathcal{D}_{G/B,\lambda}-mod)\rightarrow D^b(U_{\lambda}-mod)$ is an equivalence of categories. \cite{BMR}
						\end{thm}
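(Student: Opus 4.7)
The plan is to exhibit an explicit quasi-inverse to $\Gamma_\lambda^B$, namely the derived localization functor
\[
\mathcal{L}oc_\lambda := \mathcal{D}_{G/B,\lambda} \otimes_{U(\g)_\lambda}^L (-) : D^b(U_\lambda\text{-}mod) \longrightarrow D^b(\mathcal{D}_{G/B,\lambda}\text{-}mod),
\]
constructed as the left adjoint of $R\Gamma$ via the identification $\Gamma(\mathcal{D}_{G/B,\lambda}) = U(\g)_\lambda$ from the preceding theorem. Establishing that the adjunction is an equivalence of bounded derived categories splits naturally into three movements: a cohomological vanishing step for $\mathcal{D}_\lambda$ itself, a conservativity step built from the Azumaya and splitting structure, and a reduction of the general $p$-regular case to one convenient weight by translation.

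For the vanishing step I would use the filtration on $\mathcal{D}_{G/B,\lambda}$ whose associated graded is $O_{T^*(G/B)}$, together with the fact that the Springer map $T^*(G/B) \to \mathcal{N}$ is a crepant resolution whose higher direct images of the structure sheaf vanish, while $\mathcal{N}$ is affine. The resulting spectral sequence collapses to give $R\Gamma(\mathcal{D}_{G/B,\lambda}) \simeq U(\g)_\lambda$ concentrated in degree zero. By a standard d\'evissage this forces the counit $\mathcal{L}oc_\lambda \circ R\Gamma \to \mathrm{id}$ to be an isomorphism on the subcategory generated by $\mathcal{D}_\lambda$, so $R\Gamma$ is fully faithful on its essential image.

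For conservativity I would combine the Azumaya structure of $\mathcal{D}_{G/B,\lambda}$ on $T^*(G/B)^{(1)}$ with the Frobenius decomposition $\xi_{Fr} \simeq S(\g^{(1)})$ of the center. Any $\mathcal{D}_\lambda$-module decomposes according to its $p$-support in $T^*(G/B)^{(1)}$, and $p$-regularity of $\lambda$ confines the supports relevant to $U_\lambda$ to fibres over the nilpotent cone. On the formal neighborhood of the zero section the splitting theorem above identifies $\mathcal{D}_\lambda\text{-}mod_0$ with $Coh_0(T^*(G/B))$, where conservativity of $R\Gamma$ follows from properness of the Springer map and affineness of $\mathcal{N}$; patching these local conservativity statements yields global conservativity of $\Gamma_\lambda^B$, which combined with fully faithfulness gives the equivalence.

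Finally, to pass from one convenient $p$-regular weight to all others one invokes translation functors, which provide equivalences between blocks indexed by $p$-regular weights in the same $W\bullet$-orbit modulo $p\Lambda$. The step I expect to be the main obstacle is the vanishing computation: unlike the characteristic zero Beilinson--Bernstein setting the abelian category $\mathcal{D}_\lambda\text{-}mod$ is genuinely not $\Gamma$-acyclic, so higher cohomology really must be controlled at the derived level, and the cleanest route passes through the geometry of the Grothendieck--Springer resolution and the rationality of singularities of $\mathcal{N}$ rather than through a direct module-theoretic calculation on $G/B$.
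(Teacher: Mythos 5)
The paper does not prove this statement at all: it is recalled verbatim as background and attributed to \cite{BMR}, so there is no internal argument to compare yours against; what follows measures your outline against the actual proof in \cite{BMR}. Your vanishing step and your use of translation functors are in the right spirit: $R\Gamma(\mathcal{D}_{G/B,\lambda})\simeq U(\g)_\lambda$ concentrated in degree $0$ is indeed obtained from the filtration with $gr(\mathcal{D})\simeq O_{T^*(G/B)}$, the vanishing of higher direct images for the Springer (in fact BMR prefer the Grothendieck) resolution, and affineness of the base. But note two things there. First, the d\'evissage from the free module $U_\lambda$ to an arbitrary object of $D^b(U_\lambda-mod)$ only works for perfect complexes, so you need finite homological dimension of $U_\lambda$ (equivalently of $\mathcal{D}_\lambda$); this is precisely where regularity of $\lambda$ enters and you do not mention it. Second, what this step proves is that the \emph{unit} $id\rightarrow R\Gamma\circ L\mathcal{L}oc$ is an isomorphism, i.e.\ that $L\mathcal{L}oc$ is fully faithful; your phrase ``the counit $\mathcal{L}oc\circ R\Gamma\rightarrow id$ is an isomorphism on the subcategory generated by $\mathcal{D}_\lambda$, so $R\Gamma$ is fully faithful on its essential image'' has the adjunction backwards, and ``fully faithful on its essential image'' is vacuous for any functor.

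The genuine gap is the conservativity step. Conservativity of $R\Gamma$ does \emph{not} follow from properness of the Springer map and affineness of $\mathcal{N}$: already for $G=SL_2$ the pushforward of $O_{\mathbb{P}^1}(-1)$ to the zero section of $T^*\mathbb{P}^1$ is a nonzero object of $Coh_0(T^*(G/B))$ killed by $R\Gamma$. The identification $\mathcal{D}_\lambda-mod_0\simeq Coh_0(T^*(G/B))$ involves a twist by the splitting bundle, and the entire content of conservativity is that this splitting bundle is a generator; \cite{BMR} prove this not by ``patching local conservativity statements'' but by a global Calabi--Yau argument: since $\omega_{T^*(G/B)}$ is trivial and the base is affine, Grothendieck--Serre duality identifies $RHom(\mathcal{D}_\lambda,\mathcal{F})$ with a shifted dual of $RHom(\mathcal{F},\mathcal{D}_\lambda)$, from which $R\Gamma\mathcal{F}=0$ forces $RHom(\mathcal{F},\mathcal{F})=0$ and hence $\mathcal{F}=0$. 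Your outline omits exactly this, which is the heart of the theorem. A second error: $p$-regularity of $\lambda$ does not confine the relevant $p$-supports to fibres over the nilpotent cone --- $D^b(U_\lambda-mod)$ contains modules with arbitrary compatible Frobenius central characters $e\in\g^{*(1)}$, and the reduction to nilpotent $e$ (completion at each $p$-character, Jordan decomposition, passage to a Levi) is a separate, nontrivial step of \cite{BMR} that your sketch skips. Without these two repairs the argument does not close.
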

						\begin{defi}
								Let $Loc^B_{\lambda}:D^b(U_{\lambda}-mod)\rightarrow D^b(\mathcal{D}_{G/B,\lambda}-mod)$ denote the derived localization functor.
						\end{defi}
						
						\begin{thm}
						Let $\lambda,e$ be a compatible pair. The derived global sections induces an equivalences on the subcategories $\Gamma^{B}_{\lambda,e}:D^b(\mathcal{D}_{\lambda}-mod_e)\rightarrow  D^b(u_{\lambda}-mod_e)$
						\end{thm}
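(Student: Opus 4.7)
The plan is to deduce this refined equivalence from the unrestricted derived localization theorem $\Gamma^{B}_{\lambda}:D^b(\mathcal{D}_{G/B,\lambda}-mod)\xrightarrow{\sim} D^b(U_{\lambda}-mod)$ by showing that it intertwines the two actions of the Frobenius center $\xi_{Fr}\simeq S(\g^{(1)})$, and then cutting out the generalized $e$-character subcategories on both sides.

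First I would recall that the center of $\mathcal{D}_{G/B}$ is canonically identified with $O_{(T^*G/B)^{(1)}}$, so in particular $\mathcal{D}_{G/B,\lambda}-mod$ is naturally a sheaf of categories over $(T^*G/B)^{(1)}$, and the global section map $\Gamma:\mathcal{D}_{G/B,\lambda}\to U(\g)_{\lambda}$ restricts on Frobenius centers to the (Frobenius twist of the) Springer-type map $O_{(T^*G/B)^{(1)}} \to S(\g^{(1)})$ induced by $T^*G/B \to \mathcal{N}\subset \g^*$. Under the equivalence $\Gamma^B_{\lambda}$, the $\xi_{Fr}$-action on a module $M$ on the right is matched with the $O_{(T^*G/B)^{(1)}}$-action on $Loc^B_\lambda(M)$ via this map; this is what it means for $\Gamma^B_\lambda$ to be a morphism of module categories over $\xi_{Fr}=S(\g^{(1)})$.

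Given a compatible pair $(\lambda,e)$, I would then identify the subcategories on both sides with support-type subcategories. On the algebraic side, $D^b(u_\lambda - mod_e)$ is by definition the full subcategory of $D^b(U_\lambda-mod)$ consisting of complexes whose cohomologies are annihilated by a power of the maximal ideal $m_e \subset \xi_{Fr}$. On the geometric side, $D^b(\mathcal{D}_\lambda - mod_e)$ is the full subcategory of $D^b(\mathcal{D}_{G/B,\lambda}-mod)$ of complexes set-theoretically supported on the Springer fiber $\pi^{-1}(e)\subset (T^*G/B)^{(1)}$, equivalently whose cohomologies are annihilated by a power of the ideal $\pi^*(m_e)$. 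Because $\Gamma^B_\lambda$ intertwines the $\xi_{Fr}$-actions via $\pi^*$, it carries one of these support subcategories to the other, and the restriction is still an equivalence because both inclusions are full subcategories of the original derived categories.

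The main obstacle is the first step: giving a clean proof that $\Gamma^B_\lambda$ really is a morphism of module categories over $\xi_{Fr}$, i.e.\ that the $O_{(T^*G/B)^{(1)}}$-action (coming from the center of crystalline differential operators) matches, after the global sections functor, the $S(\g^{(1)})$-action via the Springer map. On the underived level this is a direct check coming from the construction of the Frobenius center and the fact that the symbol map $gr(\mathcal{D}_{G/B})\simeq O_{T^*G/B}$ is $G$-equivariant and compatible with $\g \to \Gamma(T^*G/B,O)$; on the derived level one needs the localization equivalence to preserve these $\xi_{Fr}$-module structures, which is automatic once the unrestricted $\Gamma^B_\lambda$ is known to be an equivalence of $U_\lambda$-bimodule-valued functors. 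Once this compatibility is in place, the restriction to the $e$-generalized-character blocks is formal, and one recovers the statement of \cite{BMR2}.
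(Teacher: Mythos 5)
The paper gives no proof of this statement: it is quoted verbatim as background from \cite{BMR}, \cite{BMR2}, so there is nothing internal to compare against. Your argument is essentially the standard deduction carried out in those references — the unrestricted equivalence $\Gamma^B_\lambda$ is linear over the Frobenius center, the map on centers $S(\g^{(1)})\to \Gamma(O_{(T^*G/B)^{(1)}})$ is the one induced by the moment map, and the two subcategories are exactly the full subcategories of complexes whose cohomologies are killed by powers of $m_e$, respectively of its pullback, so the equivalence and its inverse both preserve them. The only point to make explicit is that the unrestricted equivalence you start from requires $\lambda$ to be $p$-regular (the hypothesis is suppressed in the statement but present in the preceding theorem of the paper), and that identifying $D^b(\mathcal{D}_\lambda-mod_e)$ with the support subcategory of $D^b(\mathcal{D}_\lambda-mod)$ is a definitional convention one should fix rather than prove.
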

				
						Note that combining the last theorem with the equivalence in equation (\ref{D-coh}), the following equivalence is obtained: $D^b(Coh_0(T^*G/B))\simeq D^b(\mathcal{D}(G/B)_{\lambda}-mod_0)\simeq D^b(u(\g)_{\lambda}^{\hat{0}}-mod)$. 
							\bs
							
		Similarly, there is a parabolic version
			\begin{defi}
			
			Let $X:=G/P$, P a parabolic. Let $A_{\lambda,(G/P)}:=\Gamma(G/P,\mathcal{D}_{G/P,\lambda})$. Let $A_{\lambda,(G/P)}-mod$ be the category of finitely generated modules over $A_{\lambda,(G/P)}$. Let $A_{\lambda,G/P}^{\hat{0}}-mod\subset A_{\lambda,(G/P)}-mod$ be the subcategory of finitely generated $A_{\lambda,(G/P)}$ modules with generalized p center, zero. We also denote it $A_{\lambda,(G/P)}-mod_0$.
			\end{defi}
			
			\begin{claim}
						Let P,Q be associated Parabolic subgroups. There is a canonical equivalence  $Pic(G/P)\simeq Pic(G/Q)$ Let $\lambda\in Pic(G/P)\simeq Pic(G/Q)$, then the algebras $A_{\lambda,(G/P)}, A_{\lambda,G/Q}$ are canonically equivalent. (See \cite{B1} for proof).
			\end{claim}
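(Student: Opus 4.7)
The plan is to reduce both statements to the common Levi subgroup $L$ shared by the associated parabolics $P$ and $Q$. For the Picard equivalence, I would use the standard fact that line bundles on a generalized flag variety come from characters of the parabolic, which in turn factor through the Levi quotient. Concretely, $Pic(G/P) \simeq Hom(P, \mathbb{G}_m) \simeq Hom(L, \mathbb{G}_m) = \Lambda_L$, and similarly for $Q$. Since associated parabolics share (up to canonical $G$-conjugation) the same Levi, both Picard groups canonically identify with $\Lambda_L$, yielding the equivalence.

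For the algebra equivalence, I would first place it in the quantization picture. The algebra $A_{\lambda,(G/P)}$ is a filtered deformation of $\Gamma(T^*(G/P), \mathcal{O})$, and since the parabolic Springer map $T^*(G/P)\to \overline{\mathcal{N}_L}$ is a symplectic resolution, the ring of functions is canonically $\Gamma(\overline{\mathcal{N}_L}, \mathcal{O})$. The same holds for $Q$: in particular $\Gamma(T^*(G/P), \mathcal{O}) \simeq \Gamma(T^*(G/Q), \mathcal{O})$ canonically. Hence both algebras quantize the same Poisson algebra, and canonicity of the equivalence amounts to matching the quantization parameters.

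Next I would identify $A_{\lambda,(G/P)}$ intrinsically: the $\g$-action on $\mathcal{L}_\lambda$ produces a natural algebra map $U(\g) \to A_{\lambda,(G/P)}$ which factors through the Harish-Chandra reduction $U(\g)_\lambda$ (and in characteristic $p$ is compatible with the Frobenius center, which on the flag side corresponds to the moment map target in $T^*(G/P)^{(1)}$). The image is all of $A_{\lambda,(G/P)}$ and its kernel is an explicit two-sided ideal determined by $(L,\lambda)$ via $\overline{\mathcal{N}_L}$. Since neither the central reduction nor the kernel ideal sees the difference between $P$ and $Q$, this gives canonical isomorphisms $A_{\lambda,(G/P)} \simeq U(\g)_\lambda/I_{L,\lambda} \simeq A_{\lambda,(G/Q)}$, from which the Morita equivalence of module categories follows.

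The main difficulty is establishing rigorous canonicity: when identifying the Levi of $P$ with that of $Q$ by an element $g\in G$, one must check that the induced isomorphism of algebras is independent of the choice of $g$ (up to the inner automorphism ambiguity that disappears after reduction). This reduces to the observation that $L$-inner automorphisms act trivially on $U(\g)_\lambda$ modulo $I_{L,\lambda}$. In characteristic $p$, extra care is required to track the Frobenius center compatibly; this is handled by noting that on each side the Frobenius center pulls back from the shared singularity $\overline{\mathcal{N}_L}$, so the identification is automatically preserved.
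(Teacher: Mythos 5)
The paper does not actually prove this claim --- it defers to \cite{B1} --- so there is no in-text argument to compare your proposal against; I can only assess it on its own terms. Your overall strategy (identify both Picard groups with $\Lambda_L=Hom(L,\mathbb{G}_m)$ via characters of the parabolic, then realize both algebras as the same central quotient of $U(\mathfrak{g})_{\lambda}$) is the natural Borho--Brylinski-style route and is consistent with how the paper uses the claim later (it invokes a surjection $u_{\lambda}^{0}\rightarrow A_{\lambda}^{0}$). The Picard half is essentially right, but your canonicity fix is misstated: the ambiguity in an element $g$ with $gL_Pg^{-1}=L_Q$ is a coset of the full normalizer $N_G(L_P)$, not of $L_P$, so it is not enough that ``$L$-inner automorphisms act trivially''; you must check that the relative Weyl group $N_G(L)/L$ acts trivially on $\Lambda_L$, which is not automatic for an arbitrary Levi and needs to be verified for the pairs $P,Q$ that actually occur.

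The genuine gap is in the algebra half. The entire content of the claim is your assertion that the kernel of $U(\mathfrak{g})_{\lambda}\rightarrow A_{\lambda,(G/P)}$ is ``determined by $(L,\lambda)$ via $\mathcal{N}_L$,'' and the justification you gesture at --- that the quotient is supported on $\mathcal{N}_L$ over the Frobenius center --- does not determine a two-sided ideal: $U(\mathfrak{g})_{\lambda}$ is Azumaya only over the regular locus of the relevant piece of $\mathfrak{g}^{*(1)}$, and $\mathcal{N}_L$ contains non-regular elements whenever $P$ is not a Borel, so two-sided ideals are not classified by closed subschemes of the central spectrum where you need them to be. Likewise, the second paragraph's remark that both algebras quantize $\Gamma(\mathcal{N}_L,\mathcal{O})$ proves nothing by itself; quantizations form a nontrivial family, and ``matching the parameters'' is exactly the missing work. (Surjectivity of $U(\mathfrak{g})_{\lambda}\rightarrow A_{\lambda,(G/P)}$ in characteristic $p$ also needs a citation, e.g.\ to \cite{BMR2}.) One way to actually close the gap: $T^*(G/P)$ and $T^*(G/Q)$, being symplectic resolutions of the same $\mathcal{N}_L$, are canonically isomorphic over an open subset whose complement has codimension at least $2$; identify the Azumaya algebras $\mathcal{D}_{G/P,\lambda}$ and $\mathcal{D}_{G/Q,\lambda}$ over that common locus using the Picard identification from the first half, and then compute both $A_{\lambda,(G/P)}$ and $A_{\lambda,(G/Q)}$ as global sections over that locus, using normality and the codimension bound.
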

			This justifies omitting P from the notation for $A_{\lambda}$
			\begin{defi}
					Let $A_{\lambda}:=A_{\lambda,(G/P)}$, Let $A_{\lambda}-mod$ be the category of finitely generated $A_{\lambda}$ modules. 
										Let $A_{\lambda}^{\hat{(0)}}-mod$ be the subcategory of finitely generated $A_{\lambda}$ modules, with generalized p center zero. 
										We also denote it $A_{\lambda}-mod _0$
			\end{defi}
			
			\begin{thm} Let $\lambda\in \Lambda_L$ be p regular. then the derived global sections functor $\Gamma^P_{\lambda}:D^b(\mathcal{D}_{G/P,\lambda}-mod)\rightarrow D^b(A_{\lambda}-mod)$ is an equivalence of categories,and  $D^b(Coh_0(T^*G/P))\simeq D^b(\mathcal{D}(G/P)_{\lambda}-mod_0)\simeq D^b(A_{\lambda}^{\hat{0}}-mod)$.  \cite{BMR2}. 
			\end{thm}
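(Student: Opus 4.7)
The strategy is to reduce the parabolic localization to the Borel case stated in the previous theorem, and then to compose with the Azumaya-splitting equivalence of equation (\ref{D-coh}) on the formal neighborhood of the zero section in order to obtain the coherent-sheaf description.

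First I would check the identification $A_\lambda = \Gamma(G/P, \mathcal{D}_{G/P,\lambda})$ together with the vanishing $R^i\Gamma(G/P, \mathcal{D}_{G/P,\lambda}) = 0$ for $i>0$. Using the filtration on $\mathcal{D}_{G/P,\lambda}$ whose associated graded is $O_{T^*(G/P)}$, this reduces to higher cohomology vanishing of $O_{T^*(G/P)}$ on $G/P$; the latter follows because the parabolic Springer map $T^*(G/P) \to \mathcal{N}_L$ is proper with affine target and $\mathcal{L}_\lambda$ is a line bundle on $G/P$ pulled back from weights of the Levi.

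To prove that $\Gamma^P_\lambda$ is an equivalence for p-regular $\lambda \in \Lambda_L$, I would pass through $G/B$ via the smooth projective fibration $\pi: G/B \to G/P$ with fiber $P/B$. Since $\lambda \in \Lambda_L$, the twist $\mathcal{L}_\lambda$ descends through $\pi$, so one obtains a commutative square relating $\Gamma^B_\lambda$ and $\Gamma^P_\lambda$ via $R\pi_*$ on the geometric side and via the appropriate parabolic induction/restriction on the algebra side. Combined with the Borel BMR equivalence already stated, this forces $\Gamma^P_\lambda$ to be fully faithful and essentially surjective. Equivalently, one defines the derived localization functor $Loc^P_\lambda(-) := \mathcal{D}_{G/P,\lambda} \otimes^L_{A_\lambda}(-)$, uses the Azumaya property of $\mathcal{D}_{G/P,\lambda}$ to get finite homological dimension so that the functor lands in the bounded derived category, and verifies on a set of compact generators that $Loc^P_\lambda$ and $\Gamma^P_\lambda$ are mutually inverse.

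For the second chain of equivalences, one restricts the equivalence just obtained to the full subcategory $D^b(\mathcal{D}_{G/P,\lambda}\textrm{-}mod_0)$ of complexes whose cohomology has support on the formal neighborhood of the zero section $G/P \subset T^*(G/P)$. Under $\Gamma^P_\lambda$ this matches $D^b(A_\lambda^{\hat 0}\textrm{-}mod)$ by definition of generalized p-character zero, giving the rightmost equivalence; the leftmost equivalence is precisely the Azumaya splitting (\ref{D-coh}) applied to $X = G/P$, realized by the functor $F_{D_\lambda, Coh_P}$ of Definition \ref{F D-coh}. The main obstacle is the descent step: one must use the p-regularity of $\lambda \in \Lambda_L$ to guarantee that $R\pi_*$ of the relevant $\mathcal{D}_{G/B,\lambda}$-modules is concentrated in a single degree and that $A_\lambda$ recovers the correct Levi-invariant part of $U_\lambda$, so that the passage from $G/B$ to $G/P$ does not introduce spectral-sequence corrections.
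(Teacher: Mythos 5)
The paper offers no proof of this statement: it is quoted as background, with the result attributed entirely to \cite{BMR2} (and the Borel case to \cite{BMR}). So there is no in-paper argument to measure your sketch against; what can be judged is whether your outline would actually constitute a proof, and its central step has a genuine gap. The claim that a commutative square intertwining $\Gamma^B_{\lambda}$ and $\Gamma^P_{\lambda}$ (via $R\pi_*$ or $\pi^*$ on the geometric side and restriction along $U_{\lambda}\twoheadrightarrow A_{\lambda}$ on the algebra side) ``forces'' $\Gamma^P_{\lambda}$ to be fully faithful and essentially surjective is not a valid inference. Neither vertical functor is an equivalence when $P\neq B$: even granting that $\pi^{\bullet}$ is fully faithful on D-modules (which follows from $R\pi_*O_{G/B}\simeq O_{G/P}$), the algebra-side comparison lands you in $\mathrm{Ext}^{\bullet}_{U_{\lambda}}$ rather than $\mathrm{Ext}^{\bullet}_{A_{\lambda}}$, and the forgetful functor $D^b(A_{\lambda}\mathrm{-mod})\rightarrow D^b(U_{\lambda}\mathrm{-mod})$ along the surjection $U_{\lambda}\rightarrow A_{\lambda}$ is not fully faithful on derived categories. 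Closing exactly this gap --- via translation functors between regular and singular blocks, a generation argument for the image of localization, and control of the homological dimension of $Loc^P_{\lambda}$ --- is the actual content of \cite{BMR2}, and your sketch defers it to ``the main obstacle'' without resolving it.

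Two smaller points. First, the cohomology-vanishing input $H^{>0}(T^*(G/P),O)=0$ (equivalently, vanishing of higher cohomology of the associated graded of $\mathcal{D}_{G/P,\lambda}$) does not follow from properness of $T^*(G/P)\rightarrow \mathcal{N}_L$ over an affine target; that only gives finite generation of $\Gamma$. In characteristic $p$ this vanishing is a nontrivial theorem (Frobenius-splitting / rational-singularities type arguments) and should be cited, not derived this way. Second, the last part of your outline is fine: once the first equivalence is known, restricting to objects supported on the formal neighborhood of the zero section matches $D^b(A_{\lambda}^{\hat{0}}\mathrm{-mod})$ by definition of the generalized $p$-character, and the left-hand equivalence is precisely the Azumaya splitting of equation (\ref{D-coh}); this is also how the paper assembles the composite equivalence.
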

			
			\begin{defi}
			Let $\Gamma_{\lambda}^P$ be the derived global sections functor.
			
					Let $Loc^P_{\lambda}$ denote the derived localization functor. 
					
					Let $F_{D_{\lambda},Coh}^P$ be an equivalence functor $D^b(\mathcal{D}_{\lambda}-mod_0)\rightarrow D^b(Coh_0(T^*G/P))$
					
					Let $F_{A_{\lambda},Coh}^P$ be an equivalence functor $D^b(A_{\lambda}-mod_0)\rightarrow D^b(Coh_0(T^*G/P))$
					
			\end{defi}

				\subsection*{A Partial order imposed by a cone}
				\begin{defi}
				Let $V_{\R}$ be a real vector space. Let $V_{\R}^0$ be the complement of a real hyperplane arrangement in $V_{\R}$. We call the connected components, alcoves. Let $C\subset V_{\R}$ be a cone. Define: two alcoves $\A,\A'$ have the relation \emph{$\A'$ is above $\A$ with respect to C}, if $\A'\subset \A+C$. Notation $\A'>_{C} \A$
			
				\end{defi}
				\begin{remark}
				Given two alcoves in $V^0_{\R}$ that share a codimention 1 wall. Let $V^0_{\CC}$ be the complexification of $V^0_{\R}$. Then, $\A'>_{C}\A$ iff the positive half loop from $\A$ to $\A'$ belongs to $V^0_{\R}+iC$. 
				\end{remark}
				
				\begin{defi}
				In particular, let $V_{\R}:=\Lambda\otimes \R$. Let $C_P\subset \Lambda\otimes \R$ be the cone of positive weights for the parabolic P. We will soon define $V^0_{\R}$ to be a complement of a hyperplane arrangement in $\Lambda\otimes \R$. We get the partial order on the alcoves, $\A<_{C_P} \A'$. We denote it also by $\A<_P \A'$ . Moreover, let $\lambda,\mu\in V_{\R}^0$ be points in $\A,\A'$ respectively, then we also denote $\lambda<_P\mu$.

				\end{defi}

\bs

As described in the abstract, we answer the D equivalence question by a construction of a local system of categories over a topological space $V^0_{\CC}$. 

\section{The construction}
\subsection*{The topological space $V^0_{\CC}$}

Let $X\rightarrow Y$ be a symplectic resolution over k. We attach the following topological space. 
\begin{defi}
Let $V_{\R}:=H^2(X,\R)$. (Observe that for a symplectic resolution X, there is a canonical equivalence $H^2(X,\R)\simeq Pic(X)\otimes \R$). This is the universal parameter space for quantizations of X, hence we can ask whether derived localization holds for the quantization associated to the point $\lambda\in V_{\R}$. This real vector space is canonically identified for different symplectic resolutions of Y. Let $V_{\R}^0\subset V_{\R}$ be the area where derived localization theorem does not hold. It's a conjecture, that for every symplectic resolution, this is a complement of an hyperplane arrangement. For the case $X=T^*G/P$ this is known. Let $V_{\CC}:=V_{\R}^0\otimes \CC$. 
\end{defi}

$V_{\CC}$ is connected.

\begin{example}
Let $X:=T^*G/P$, $V_{\R}^0\subset \Lambda_L\otimes \R$ is the p-regular weights.
\end{example}

\begin{defi}
We call the connected components in $V_{\R}^0$, \emph{the alcoves}.  
Let $\lambda\in V_{\R}^0$. We let $\mathcal{A}_{\lambda}$ be the alcove that contains $\lambda$.
\end{defi}

\subsection*{The local system}
Let $P_0\subset G$ be a fixed parabolic subgroup. Let L be its Levi subgroup.
Let Y be $\mathcal{N}_L$. Let $X^{(i)}$ be $T^*G/P^{(i)}$ for different Parabolic subgroups $P^{(i)}$ that are associated to $P_0$. The categories $D^b(Coh_0(X^{(i)}))$ are all equivalent. We construct a local system of categories on $V^0_{\CC}$, with values the categories $D^b(Coh_0 X^{(i)})$ and $D^b(A_{\lambda}-mod_0)$, $\lambda$ a p-regular weight in $\Lambda_L$. These categories are equivalent, however we choose to assign different realizations of the categories to different points, so that it allows us to describe the functors attached to paths between points in a natural way. In particular,  for every $X^{(i)}$, there is a special point $pt_{X^{(i)}}\in V_0^{\CC}$ (A contractible subset of points), to which the local system assigns the category $D^b(Coh(X^{(i)}))$. This suggests a picture, where natural D equivalences between the derived categories $D^b(Coh X^{(i)}), D^b(Coh X^{(j)})$, are parametrized by homotopy classes of paths between the associated points $pt_{X^{(i)}}, pt_{X^{(j)}}$ in the topological space $V^0_{\CC}$. For abbreviation, we denote the point $pt_{T^*G/P^{(i)}}$ by $pt_{P^{(i)}}$.

In the more general setup of any symplectic resolution, Namikawa constructed an action \cite{Na} of a generalized Weyl group on the parameter space $V^0_{\CC}$. It may be possible to generalize the picture suggested here to other symplectic singularities, using Namikawa's action. \cite{Na} 
\bs

Recall from \cite{B1}, to construct a local system of categories on a complexification of a complement of hyperplanes, $V^0_{\CC}$, we can use Salvetti's generators and relations of the groupoid of $V^0_{\CC}$. In order to see the points $pt_{P^{(i)}}$ in the picture, we use a refinement of these generators and relations.

\subsection*{Refinement of Salvetti's generators and relations.}
We want to construct a functor from the groupoid on $V^0_{\CC}$ to Cat. (Attaching categories to points in $V^0_{\CC}$, attaching functors up to isomorphism to homotopy classes of paths between points, s.t concatenation of paths corresponds to composition of the functors. Not discussing higher compatibilities).
$V^0_{\CC}$ is the complexification of the complement of a collection of hyperplanes. Recall from \cite{B1} Salvetti generators and relations of the groupoid. 
The points to which we attach a category, are one point for each real alcove. The paths for which we attach a functor between the categories (the \emph{generators} of the groupoid) are the following, 
Let $\mathcal{A}, \mathcal{A'}$ be two real alcoves in $V^0_{\R}$ that share a codimension one wall. Let $l_{\mathcal{A},\mathcal{A'}}$ be the half loop in the positive direction from $\mathcal{A}$ to $\mathcal{A}'$.  
$l_{\mathcal{A},\mathcal{A'}}$ are the generators.

To refine it, for each parabolic P, consider the positive cone of weights $C_P\subset V_{\R}$. Consider the subset, $V^0_{\R}+iC_{P}\subset V^0_{\C}$, Let the added points $pt_P$ be any point in this set. 

Given a parabolic P, there is a natural path (line) from the point $pt_P$ to a point in each of the real alcoves. The refined generators are taken to be these paths between any real alcove to the points $pt_{P^{(i)}}$.

\subsection*{The definition of the local system}

We attach the following categories to the points: To a point in the real alcove $\mathcal{A}_{\lambda}$ attach $D^b(A_{\lambda}-mod_0)$. To a point $pt_P$ we already said we attach $D^b(Coh_0(T^*G/P))$.

For the line from an alcove $\mathcal{A}_{\lambda}$ to the point $pt_P$ attach the functor 
\begin{equation}
F_{A_{\lambda}, Coh_P}:D^b(A_{\lambda}-mod_0)\rightarrow D^b(Coh_0(T^*G/P))
\end{equation} 

\begin{equation}
F_{A_{\lambda},Coh_P}:=F_{D,Coh_P}(\otimes O(0-\lambda)) Loc_{\lambda}^P
\end{equation}

$O(0-\lambda)\in Pic(T^*G/P)\simeq \Lambda_L$, $F_{D,Coh}$ was defined above in definition \ref{F D-coh}.

\bs

Observe, given an old generator, $l_{\A_{\lambda},\A'_{\mu}}$ half a loop in the positive direction from the alcove $\mathcal{A}_{\lambda}$ to $\mathcal{A}_{\mu}$. Let P be a parabolic associated to $P_0$. The half loop is contained in $V^0_{\R}+iC_P$ iff 

\begin{equation}
\label{order}
\lambda<_P\mu
\end{equation}

Hence this old generator breaks to the natural path from $\mathcal{A}_{\lambda}$ to $pt_P$ and a path from $pt_P$ to $\mathcal{A}_{\mu}$, for each parabolic P, associated to $P_0$,  whose induced order satisfies equation (\ref{order}). 

Observe that for this old generator, the functor that we attach here  between $D^b(A_{\lambda}-mod_0)\rightarrow D^b(A_{\mu}-mod_0)$ is $\Gamma^P_{\mu}\otimes O(\mu-\lambda) Loc^P_{\lambda}$ for a parabolic P(associated to $P_0$) such that $\lambda<_P\mu$. Denote this functor $F_{\lambda,\mu}$. It follows from \cite{B1} that this is independent of the choice of P and that this is a well defined local system on $V^0_{\CC}$ with values in Cat.		
\bs

\section{Lifting to characteristic 0}		
\section*{Lifting the local system to char 0, and to case without a support condition.}		

$P_0$ is our fixed parabolic. Lets discuss the local system using the non-refined Salvetti generators $l_{\A,\A'}$. 
\begin{defi}
Using the canonical equivalence $F_{u_{\lambda},Coh_{P_0}}:D^b(A_{\lambda}-mod_0)\rightarrow D^b(Coh_0(T^*G/P_0))$

\begin{equation}F_{u_{\lambda},Coh_{P_0}}:=F_{D,Coh_{P_0}} \otimes O(0-\lambda)Loc_{P_0,\lambda}
\end{equation}
we consider the above as a local system with constant value the category $D^bCoh_0(T^*G/P_0)$.
\end{defi}

In this setup, the functors attached to half loops in positive directions between adjacent alcoves that share a codim 1 wall are:
\begin{defi}
$F_{\lambda,\mu,Coh_{P_0}}:=F_{u_{\lambda},Coh_{P_0}}^{-1}F_{\lambda,\mu}F_{u_{\mu},Coh_{P_0}}$
\end{defi}

If $\lambda<_{P_0}\mu$, then $F_{\lambda,\mu,Coh_{P_0}}\simeq Id$

\bs

The above local system was constructed when G lives over k, the characteristic p field, and the value was the category of bounded derived coherent sheaves with support condition. We would like to extend. We would like to construct a local system on $V^0_{\CC}$ with value the category $D^b(Coh(T^*G/P_0))$,(without the support condition), s.t the restriction of the functors to the subcategory $D^b(Coh_0(T^*G/P_0))$ recovers the above local system. We would also like to extend to case that $G$ lives over a characteristic zero field/ring.

 We discuss the kernels by which these functors $F_{\lambda,\mu,Coh_{P_0}}$ are defined, and some properties of these kernels with respect to base change.

\bs

Let $R$ be a $\Z[1/h]$ algebra, where h is the Coxeter number. 
Let $G_{R}$ be a split connected, simply connected, semi simple algebraic group over $R$. Consider its corresponding lie algebra $\g_{R}$. Consider a maximal torus and a Borel subgroup that contains it and their lie algebras respectively. $T_{R}, B_{0,R}, \mathfrak{t}_{R}, \mathfrak{b}_{0,R}$. Let $\Lambda$ be the weight lattice. Consider the corresponding root system $\phi$, together with positive roots $\phi^+$ for $\mathfrak{b}_{0,R}$ and the simple roots $\Sigma$. Consider the Weyl group W. Consider the set I of Coxeter generators of W, $s_{\alpha}$ associated to $\Sigma$. Consider the affine Weyl group $W_{aff}:=W\ltimes \Lambda$. It's a Coxeter group. To each Coxeter group there is an associated braid group. Consider the associated Braid group $Br_{aff}$. 

Notation: We denote a base change to a ring $R'$ by subscript $_{R'}$.

\subsubsection*{$F_{\lambda,\mu,Coh_{B_0}}$ and the classical action of the affine Braid group on $D^b(Coh(T^*G/B_0))$}

There exists an action of $Br_{aff}$ on $D^b(Coh(T^*G/B_0)_R)$, constructed by Bezrukavnikov and Riche in \cite{BR}\cite{R}. By base changing to an algebraically closed field ,k, of characteristic $p>h$, and then restricting to a formal neighborhood of the zero section $G/B_0\subset T^*G/B_0$, this induces an action on $D^b(Coh_0(T^*(G/B_0)_k)$. Observe that $\pi_1(V^0_{\CC})\simeq Br_{aff,pure}\neq Br_{aff}$. However in the case $G/B_0$, there is an additional symmetry on the parameter space $V^0_{\CC}$. $W_{aff}$ acts on the alcoves in $V^0_{\R}$, simply transitively, and the affine braid group is the fundamental group for the quotient space $\pi_1(V^0_{\CC}/W_{aff})\simeq Br_{aff}$.

\begin{defi}
Consider the projection $Br_{aff}\rightarrow W_{aff}$. There exists a natural section of the projection $Br_{aff}\rightarrow W_{aff}$, [BMR2,2.1]. Denote this section $w\mapsto T_w$. It's not a group homomorphism, however, there is the length function $l:W_{aff}\rightarrow \mathbb{N}$, and the section satisfies the rule $T_{w_1w_2}=T_{w_1}T_{w_2}$ when $l(w_1)+l(w_2)=l_{w_1w_2}$.
\end{defi}

Let $F_w$ denote the action functor of $T_w$ on $D^b(Coh(T^*G/B_0))_{\Z[1/h]}$. More generally, let $F_{w,R}$ denote the action functor on a base change to R, $D^b(Coh(T^*G/B)_R)$. In particular Let $F_{w,k}$ denote the action of $T_w$ on $D^b(Coh_0(T^*G/B_{0})_k)$. Let $\lambda,\mu$ belong to two adjacent alcoves in $V^0_{\R}$, that share a codimension one wall. There exists a unique $w\in W_{aff}$ s.t $\mu=w.\lambda$. If $\lambda>_{B_0} \mu$, then $F_{w,k}\simeq F_{\lambda,\mu,Coh_{B_0}}$. 

The functors $F_{w,R}$, have kernels $\mathcal{F}_w\in D^b(Coh(T^*(G/B_0)_R\times_R T^*(G/B_0)_R))$, which are in fact in the heart of the natural t structure, and are simple to describe. We recall their description from the work of \cite{BR}. 

\bs

Recall,

\subsection*{A presentation of $Br_{aff}$}

There exist several natural presentations of $Br_{aff}$. We consider the following one:
\bs

Generators: Let $s\in I$ be the simple reflections. To each simple reflection we associate a generator of $Br_{aff}$ called $T_{s}$.  ($T_{s}$  will indeed be the natural lift of $s\in W_{aff}$ under the projection $Br_{aff}\rightarrow W_{aff}$ mentioned above). Another form of generators are $\theta_x$ $x\in \Lambda$. 

\bs

Relations: Let $s,t\in I$ be simple reflections. Let $n_{\alpha,\beta}$ denote the order, $Ord(s,t\in W)$ of $st\in W$. It's enough to impose the following relations: 

Relations between the $T_{s}$ $s\in I$ is the braid relations: $T_{s}T_{t}...=T_{t}T_{s}...$  $n_{s,t}$ compositions on each side.

Relations between the $\theta$, are saying that $\theta:\Lambda\rightarrow Br_{aff}$ is a group homomorphism, $\theta_x\theta_y=\theta_{x+y}$. 

Relations between $T_{s}$ and $\theta_x$: They commute if $\alpha(x)=x$, and otherwise, $\theta_x=T_{s}\theta_{x-\alpha(x)}T_{s}$, where $s:=s_{\alpha}$ the simple reflection for a simple root $\alpha$.

\subsection*{The kernels of the functors $F_{\lambda,\mu,Coh_{B_0}}$}

The kernel of $F_{\lambda,\mu,Coh_{B_0}}$ has a nice description. As proved in \cite{BR},\cite{R}, there is a nice description of the kernel of the action $F_{w,R}$. This description is compatible with base change. 

The description is most natural in the Koszul dual situation \cite{MR} where $T^*G/B_R\rightarrow \mathcal{N}_R$ is replaced by the Grothendieck resolution $\tilde{\g_R}\rightarrow \g_R$. (The fact that the Grothendieck resolution is small, whereas the springer resolution is only semi small, makes the Grothendieck resolution better to work with). 
\bs

\subsubsection*{The idea of the kernels construction}

Consider the case $w\in W\subset W_{aff}$.
Let $F_{w,R}^{\tilde{\g}}$ be the notation for the functor which is the action of $T_w$ on the category $D^b(Coh(\tilde{\g}_R))$.

Let $\pi:\tilde{\g}\rightarrow \g$. Let $\g_{reg}\subset \g$ the regular elements.  Let $\tilde{\g}_{reg}:=\pi^{-1}(\g_{reg})$.

The kernels that describe $F_w^{\tilde{\g}}$ are constructed by the following idea:
Recall the construction of the action of the Weyl group W on the cohomology of a springer fiber $\mathcal{B}_e$ $e\in \mathcal{N}$, preformed using the minimal Goresky MacPherson extension of a perverse sheaf \cite{L1}. The idea is to use the fact that $\tilde{\g}_{reg}$ is a ramified Galois covering with Galois group W. Thus W acts on $\tilde{\g}_{reg}$ by deck transformations. Then, this action extends to an action on the cohomology of the springer fiber. 

The idea of the construction of the Braid group action on $D^b(Coh(\tilde{\g}))$ is similar. Starting the action of W on $\tilde{\g}_{reg}$ and extending it to action on the category of derived coherent sheaves on $\tilde{\g}$.

\begin{remark} When restricting $\tilde{\g}\rightarrow \g$ to the regular semi simple locus this is a cover, hence W acts on $\tilde{\g}_{reg-ss}$ by Deck transformations. Then, this action can be extended to the regular locus.
\end{remark}

The formula of the functors of $F_{w}^{\tilde{\g}}$ in terms of kernels is as follows:

\subsubsection*{The kernels construction}

Let $(B_0)_R$ be the fixed Borel, let $\mathcal{B}_R$ be the flag variety. Let $w\in W\subset W_{aff}$. Let $T_w\in Br_{aff}$. The action of $T_w$ on $D^b(Coh(\tilde{\g}_R))$ is defined in \cite{BR} to be a functor $F_{w,R}^{\tilde{\g}}$, whose kernel is as follows. 

\begin{defi}
Let $\Gamma_{w,R}$ be the graph of the action of $w\in W$ on $\tilde{\g}_{reg,R}$. Let $Z_{w,R}$ be the closure of the graph. Another description is by considering the natural morphisms $\tilde{\g_R}\times_R \tilde{\g_R}\rightarrow \mathcal{B}_R\times_R \mathcal{B}_R$. $Z_{w,R}$ is the inverse image of the $G_R$ orbit, $(B_{0,R},w^{-1}B_{0,R})$.  
\end{defi}
\begin{defi}
Let the kernel of $F_{w,R}^{\tilde{\g}}$ be $O_{Z_{w,R}}\in D^b((T^*G/B_0\times T^*G/B_0)_R)$
\end{defi}

\begin{defi}
In the dual version where the category is $D^bCoh(T^*G/B_0)_R$, the kernel is given by the structure sheaf of a closed subscheme of $T^*G/B_{0,R}\times_R T^*G/B_{0,R}$. The subscheme is defined to be the scheme theoretic intersection $Z_{w,R}':=Z_{w,R}\cap (T^*G/B_{0,R}\times _R T^*G/B_{0,R})$. The functors in this case are denoted $F_{w,R}$.
\end{defi}

\begin{claim}\cite{BR}
These functors $F_{w,R}^{\tilde{\g}}$ , $F_{w,R}$ define an action of the Braid group on the categories $D^b(Coh(\tilde{\g}_R)), D^b(Coh(T^*G/B_0)_R)$ respectively. In other words, the functors satisfy the braid relations. 
\end{claim}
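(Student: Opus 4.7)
The strategy is to reduce the braid relations to a single multiplicativity statement for the kernels. Explicitly, the key lemma is that for $w_1, w_2 \in W$ with $l(w_1 w_2) = l(w_1) + l(w_2)$, there is a canonical isomorphism
\[
O_{Z_{w_1,R}} * O_{Z_{w_2,R}} \simeq O_{Z_{w_1 w_2,R}}
\]
in $D^b(Coh(\tilde{\g}_R \times_R \tilde{\g}_R))$, where $*$ denotes convolution of kernels. Once this is in hand, both sides of the braid relation $\underbrace{T_s T_t T_s \cdots}_{n_{s,t}} = \underbrace{T_t T_s T_t \cdots}_{n_{s,t}}$ are reduced expressions for the longest element $w_0^{s,t}$ of the rank-two subgroup $\langle s, t \rangle \subset W$, so iterating the multiplicativity lemma produces the same kernel $O_{Z_{w_0^{s,t},R}}$ from both sides and the relation follows.

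To prove the multiplicativity lemma, I would first observe that on the regular locus $\tilde{\g}_{reg,R}$ the $W$-action is free and arises from deck transformations of the Galois cover (extended from the regular semisimple part). In particular, the set-theoretic composite of the graphs $\Gamma_{w_1,R}$ and $\Gamma_{w_2,R}$ is literally $\Gamma_{w_1 w_2,R}$, which produces a canonical morphism from the derived convolution to $O_{Z_{w_1 w_2,R}}$. The content then lies in showing (i) the convolution is concentrated in cohomological degree zero, and (ii) the resulting sheaf is exactly the structure sheaf of the closure. For (i) one verifies that the fiber product $Z_{w_1,R} \times_{\tilde{\g}_R} Z_{w_2,R}$ has the expected dimension, so that higher Tor sheaves vanish; the length-additivity hypothesis is exactly what makes this codimension count work out, and the argument can be made either by a direct Cohen-Macaulay check or by exhibiting compatible Bott-Samelson-type resolutions of the $Z_{w,R}$ associated to reduced expressions. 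For (ii) one combines the generic identification on $\tilde{\g}_{reg,R}$ with normality/reducedness of $Z_{w_1 w_2,R}$ to extend the isomorphism globally.

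The cotangent version, asserting the braid relations for the $F_{w,R}$ on $D^b(Coh(T^*G/B_0)_R)$, follows by restricting the Grothendieck-resolution argument along the closed embedding $T^*(G/B_0)_R \times_R T^*(G/B_0)_R \hookrightarrow \tilde{\g}_R \times_R \tilde{\g}_R$. Because $Z'_{w,R}$ is defined as the scheme-theoretic intersection with this cotangent product, convolution commutes with restriction provided the same Tor-vanishing input holds, which it does by the same dimension count applied to the intersected fiber products.

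The main obstacle I expect is precisely the Tor-vanishing/dimension input underlying the multiplicativity: one needs the closures $Z_{w,R}$ and their fiber products over $\tilde{\g}_R$ to be sufficiently well-behaved (in particular Cohen-Macaulay, with no excess intersection), and this must persist uniformly over the base ring $R$, which is only a $\Z[1/h]$-algebra rather than a field. Ensuring base-change robustness of the Cohen-Macaulayness, and of the comparison with Bott-Samelson resolutions integrally, is the geometric heart of the argument and the point where the greatest care is required.
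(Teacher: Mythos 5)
The paper offers no proof of this claim --- it is imported verbatim from \cite{BR} (see also \cite{R}) --- so there is nothing internal to compare your argument against, and I will judge it against the cited source. Your outline does track the strategy of \cite{BR}: reduce the braid relation for a pair $s,t$ to the multiplicativity $O_{Z_{w_1,R}}\star O_{Z_{w_2,R}}\simeq O_{Z_{w_1w_2,R}}$ for length-additive products, apply it to the two reduced expressions of the longest element of $\langle s,t\rangle$, obtain the isomorphism generically from composition of graphs over the regular locus, and deduce the $T^*(G/B_0)_R$ statement from the $\tilde{\g}_R$ statement by restricting kernels along the closed embedding, using Tor-independence. Two corrections of detail: the $W$-action on $\tilde{\g}_{reg}$ is not free (over a regular nilpotent element the fiber of $\tilde{\g}\rightarrow\g$ is a single point, fixed by all of $W$); what you need, and what is true, is only that the deck-transformation action on the regular semisimple locus extends to an action on $\tilde{\g}_{reg}$, so that the graphs still compose as correspondences. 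Also note that the claim as stated concerns only $w\in W$; the relations involving the generators $\theta_x$ are handled separately in the remark following the claim.

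The substantive issue is that your plan stops exactly where the proof begins. Everything up to your items (i) and (ii) is formal; the braid relations are equivalent to the Tor-vanishing plus the identification of the degree-zero convolution with $O_{Z_{w_1w_2,R}}$, and you explicitly leave both as ``the main obstacle.'' Moreover, the specific tools you propose are not the ones known to work: general $Z_{w,R}$ are not known to be normal or Cohen--Macaulay integrally, and \cite{BR} does not argue that way. The passage from closed points to $R=\Z[1/h]$ is handled there by the derived base-change lemma that the present paper itself quotes from \cite{BR} in its characteristic-zero section (an isomorphism of kernels over $R$ may be checked after derived extension to $\overline{F_q}$ for the residue characteristics of $R$), and over a field the identification of the convolution is obtained by restricting to an open subset whose complement meets the relevant supports in codimension at least two --- where the regular-locus description applies --- and extending across using a depth condition on the convolved sheaves established inductively, rather than via Bott--Samelson resolutions or normality of the closures. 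So the architecture of your reduction is right, but the geometric core is acknowledged rather than executed, and as written the execution strategy would need to be replaced by the codimension-two-plus-depth and reduction-mod-$q$ arguments of \cite{BR}.
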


\subsubsection*{The affine braid group action in terms of kernels}
\begin{remark}
Above we recalled the action of $T_w$ for $w\in W$. The extra action of $\Lambda\subset Br_{aff}$, is given by twist by the line bundles corresponding to the lattice elements under the equivalence $\Lambda\simeq Pic(T^*G/B)$. Let $\theta_x$ $x\in \Lambda$ act by twist by the line bundle corresponding to x. The kernel is the direct image of $O_{(T^*G/B)_R}(\lambda)$ under the diagonal embedding, $\Delta\hookrightarrow (T^*G/B)_R\times (T^*G/B)_R$. It's clear that the relation $\theta_x\theta_y\simeq \theta_{x+y}$ for $x,y\in \Lambda$ holds. The relation between $T_{s},\theta_x$ holds as well. 
\end{remark}

\begin{observation}
	Let $R\rightarrow R'$ be a ring morphism. By definition, the kernel of the functor $F_w^{\tilde{\g}_{R'}}$ is a base change of the kernel of $F_w^{\tilde{\g}_{R}}$
	\bs
	
	A similar observation is true for the kernels of the actions of $T_w$ on $D^b(Coh(T^*G/B_R))$
\end{observation}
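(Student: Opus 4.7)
The plan is to verify base change compatibility by unwinding the definition of each kernel in turn, and checking that every geometric construction involved (fibered products, preimages under smooth morphisms, Bruhat orbits) is compatible with base change of the base ring.

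First I would handle the case $w \in W$ for the $\tilde{\g}$ version. The scheme $\tilde{\g}_R = G_R \times^{B_{0,R}} \mathfrak{b}_{0,R}$ and the flag variety $\mathcal{B}_R = G_R/B_{0,R}$ are obtained by base change from their $\Z[1/h]$-models, hence $\tilde{\g}_{R'} = \tilde{\g}_R \times_R R'$ and similarly for $\mathcal{B}_{R'}$. The $G_R$-orbit $(B_{0,R}, w^{-1}B_{0,R}) \subset \mathcal{B}_R \times_R \mathcal{B}_R$ is a Bruhat cell defined over $\Z$, so its formation commutes with base change. The morphism $\tilde{\g}_R \times_R \tilde{\g}_R \to \mathcal{B}_R \times_R \mathcal{B}_R$ is a vector bundle, in particular flat, so the scheme-theoretic preimage $Z_{w,R}$ is flat over $R$ and its formation commutes with base change. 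Pushforward of $O_{Z_{w,R}}$ along the closed embedding into $\tilde{\g}_R \times_R \tilde{\g}_R$ also commutes with base change, giving the claim for $T_w$, $w \in W$.

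Next I would handle the generators $\theta_x$ for $x \in \Lambda$. Here the kernel is $\Delta_* O_{T^*G/B_R}(\lambda)$, where both the diagonal embedding $\Delta$ and the line bundle $O(\lambda)$ are pulled back from $\Z[1/h]$-models. Base change compatibility is then immediate. Since the relations in the presentation of $Br_{aff}$ are between kernels of the two types just discussed, and since base change is a symmetric monoidal functor on derived categories of coherent sheaves (so it respects convolution of kernels), this establishes the observation for the whole $\tilde{\g}$ action.

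Finally I would treat the dual version on $D^b(Coh(T^*G/B_R))$. Here the kernel is $O_{Z'_{w,R}}$ where $Z'_{w,R} = Z_{w,R} \cap (T^*G/B_R \times_R T^*G/B_R)$ is a scheme-theoretic intersection inside $\tilde{\g}_R \times_R \tilde{\g}_R$. This is the point where the argument is least automatic, and the main obstacle to overcome: scheme-theoretic intersections do not commute with arbitrary base change unless one has appropriate Tor-vanishing. The resolution is that $T^*G/B_R \hookrightarrow \tilde{\g}_R$ is the preimage of the nilpotent cone $\mathcal{N}_R \subset \g_R$ under $\pi$, and since we work over $\Z[1/h]$ the ideal of $\mathcal{N}_R$ in $\g_R$ is generated by a regular sequence (the positive-degree generators of $S(\mathfrak{t})^W$ pulled back via the Chevalley map). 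Combined with the flatness of $Z_{w,R}$ over $R$ established in the first step, this ensures that the intersection is Tor-independent, so $Z'_{w,R}$ is itself flat over $R$ and $O_{Z'_{w,R}} \otimes_R^L R' \simeq O_{Z'_{w,R'}}$, completing the proof.
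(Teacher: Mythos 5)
The paper itself gives no argument here: the observation is asserted ``by definition,'' the implicit point being that in \cite{BR} the kernels over an arbitrary base are produced from the $\Z[1/h]$-model by derived pullback --- for the generators $T_s$ and $\theta_x$ the kernels are a fiber product $\tilde{\g}\times_{\tilde{\g}_\alpha}\tilde{\g}$ (over the partial Grothendieck resolution) and a twisted diagonal, both manifestly compatible with base change, and for all other braid-group elements the kernel is a convolution of these, which is compatible with base change because derived pullback along $X_{R'}\times_{R'}X_{R'}\to X_R\times_R X_R$ is monoidal. Your proposal instead tries to verify directly that the subschemes $Z_{w,R}$ themselves commute with base change for every $w\in W$, which is a genuinely different (and more ambitious) route.

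The step that does not work as written is the first one. The preimage of the Bruhat cell (or its closure) under the vector bundle map $\tilde{\g}_R\times_R\tilde{\g}_R\to\mathcal{B}_R\times_R\mathcal{B}_R$ has dimension $\dim\overline{O_w}+2\dim\mathfrak{b}$, which exceeds $\dim Z_w=\dim\g$ already for $G=SL_2$; so $Z_{w,R}$ is not that preimage (the paper's ``another description'' is itself imprecise --- one must at least intersect with $\tilde{\g}\times_{\g}\tilde{\g}$ and then take a closure). The honest definition of $Z_{w,R}$ is the scheme-theoretic closure of the graph over $\tilde{\g}_{reg,R}$, and closures do \emph{not} commute with base change in general: over $\Z[1/h]$ the closure is automatically $R$-flat, but its special fibers could a priori be non-reduced or strictly larger than the fiberwise closures. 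So flatness and base-change compatibility of $Z_{w,R}$ for general $w$ is a theorem of \cite{BR}, not a consequence of flatness of a bundle projection. The repair is the one you already gesture at in your second paragraph: restrict the direct geometric argument to the simple reflections (where the kernel is the fiber product $\tilde{\g}\times_{\tilde{\g}_\alpha}\tilde{\g}$, whose formation trivially commutes with base change) and to $\theta_x$, and obtain all other kernels by convolution plus monoidality of derived pullback. Your third step is the right idea --- the scheme-theoretic intersection with $T^*\mathcal{B}\times T^*\mathcal{B}$ is indeed the delicate point, and Tor-independence via the regular sequence cutting out the nilpotent cone is how \cite{BR} handles it --- but to conclude that the sequence stays regular after restriction to $Z_{w,R}$ and to each of its fibers you need the additional input that those fibers are Cohen--Macaulay of the expected dimension; that is again a result of \cite{BR} rather than something that follows from flatness over $R$ alone.
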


The actions of $Br_{aff}$ on $D^b(Coh(\tilde{\g})_R), D^b(Coh(T^*G/B)_R)$ are \emph{weak geometric actions}. See next subsection for the definition of that notion.

\bs

\subsubsection*{Aside: Convolution of kernels and weak geometric action}

Let X be a smooth scheme over a commutative ring R. 
Consider the two natural projections $\pi_1,\pi_2:X\times _R X\rightarrow X$. Consider a sheaf $\mathcal{F}\in D^bCoh(X\times_R X)$. We say that $\mathcal{F}$ is good, if it is in the image of the derived pushforward from derived coherent sheaves on a closed subscheme of $X\times_R X$,  for which the two projections to $X$ are proper. If $\mathcal{F}$ is a good sheaf, then it's a kernel of a well defined functor $F_{\mathcal{F}}:D^b(Coh(X))\rightarrow D^b(Coh(X))$ 

\begin{equation}F_{\mathcal{F}}(\mathcal{G}):=\pi_{2*}(\mathcal{F}\otimes \pi_1^* \mathcal{G})
\end{equation}
 (Where all operations on the Right hand side are derived)

The formula for the composition of two such functors in terms of the kernels is well known: 
 
\begin{claim}

Let $\mathcal{F}_1, \mathcal{F}_2$ be two kernels that defines functors $D^bCoh(X)\rightarrow D^bCoh(X)$.
The composition of $\mathcal{F}_1,\mathcal{F}_2$ is another functor given by a kernel. The kernel is the convolution of the two kernels. Let $\pi_{12},\pi_{13},\pi_{23}:X\times _R X\times_R X\rightarrow X\times _R X$ be the three possible projections. Then
\begin{equation}
\mathcal{F}_1\star \mathcal{F}_2:= \pi_{13*}(\pi_{12}^*\mathcal{F}_2\otimes \pi_{23}^*\mathcal{F}_2)
\end{equation}
 (Again all operations are derived)
\end{claim}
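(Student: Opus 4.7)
\subsection*{Proof plan for the convolution formula}

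The plan is to unfold the definition of $F_{\mathcal{F}_1}$ and $F_{\mathcal{F}_2}$ on a test sheaf $\mathcal{G} \in D^b(\mathrm{Coh}(X))$, and then manipulate the expression using derived flat base change together with the derived projection formula until it matches the definition of $F_{\mathcal{F}_1 \star \mathcal{F}_2}(\mathcal{G})$. To keep track of things I would label three copies $X_1, X_2, X_3$ of $X$, write $\pi^{ij}_i, \pi^{ij}_j$ for the two projections from $X_i \times_R X_j$, and $p_i : X_1 \times_R X_2 \times_R X_3 \to X_i$ and $\pi_{ij}$ for the three projections from the triple product. The goodness hypothesis (the kernels are pushed forward from closed subschemes along which the two projections to $X$ are proper) will be used to guarantee that the pushforwards in sight preserve $D^b(\mathrm{Coh})$, and that the various applications of the projection formula are legitimate in the derived category; smoothness of $X$ over $R$ gives flatness of all projections, which is what we need for flat base change.

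First I would expand
\begin{equation}
F_{\mathcal{F}_2}\bigl(F_{\mathcal{F}_1}(\mathcal{G})\bigr) \;=\; \pi^{23}_{3*}\Bigl(\mathcal{F}_2 \otimes \pi^{23*}_{2}\, \pi^{12}_{2*}\bigl(\mathcal{F}_1 \otimes \pi^{12*}_1 \mathcal{G}\bigr)\Bigr).
\end{equation}
The key step is to recognise the Cartesian square in which $\pi^{12}_2$ and $\pi^{23}_2$ meet at $X_2$, with $\pi_{12}$ and $\pi_{23}$ as the two projections from the triple fibre product $X_1 \times_R X_2 \times_R X_3$. Derived flat base change along this square gives the identification $\pi^{23*}_2 \, \pi^{12}_{2*} \simeq \pi_{23*}\, \pi_{12}^*$. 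Substituting and applying the projection formula (to pull $\mathcal{F}_2$ inside the pushforward $\pi_{23*}$, and to pull $\mathcal{G}$ under $\pi_{12}^*$ through the tensor with $\mathcal{F}_1$) yields
\begin{equation}
F_{\mathcal{F}_2}\bigl(F_{\mathcal{F}_1}(\mathcal{G})\bigr) \;\simeq\; p_{3*}\bigl(\pi_{12}^*\mathcal{F}_1 \otimes \pi_{23}^* \mathcal{F}_2 \otimes p_1^* \mathcal{G}\bigr),
\end{equation}
using $p_1 = \pi^{12}_1 \circ \pi_{12}$ and $p_3 = \pi^{23}_3 \circ \pi_{23}$.

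Next I would expand the right hand side. By definition
\begin{equation}
F_{\mathcal{F}_1 \star \mathcal{F}_2}(\mathcal{G}) \;=\; \pi^{13}_{3*}\Bigl(\pi_{13*}\bigl(\pi_{12}^*\mathcal{F}_1 \otimes \pi_{23}^* \mathcal{F}_2\bigr) \otimes \pi^{13*}_1 \mathcal{G}\Bigr).
\end{equation}
Applying the projection formula along $\pi_{13}$ to pull $\pi^{13*}_1 \mathcal{G}$ inside gives
\begin{equation}
F_{\mathcal{F}_1 \star \mathcal{F}_2}(\mathcal{G}) \;\simeq\; p_{3*}\bigl(\pi_{12}^*\mathcal{F}_1 \otimes \pi_{23}^* \mathcal{F}_2 \otimes p_1^* \mathcal{G}\bigr),
\end{equation}
which is the same expression as before. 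Functoriality of base change and of the projection formula in $\mathcal{G}$ shows the isomorphism is natural, so $F_{\mathcal{F}_2} \circ F_{\mathcal{F}_1} \simeq F_{\mathcal{F}_1 \star \mathcal{F}_2}$ as functors.

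The main obstacle is not the formal manipulation, which is essentially bookkeeping, but verifying that the derived projection formula and derived flat base change are available at each step in the relative setting over $R$ and with the $\mathcal{F}_i$ only assumed to be good. Concretely I would check that smoothness of $X/R$ gives the requisite Tor-independence for base change, and that properness of the projections on the closed subschemes supporting the $\mathcal{F}_i$ ensures the pushforwards preserve $D^b(\mathrm{Coh})$ and are compatible with tensor product via the projection formula; this also shows that the convolution $\mathcal{F}_1 \star \mathcal{F}_2$ is itself a good kernel, so that $F_{\mathcal{F}_1 \star \mathcal{F}_2}$ is well defined to begin with.
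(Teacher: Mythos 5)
Your proof is correct, and it is the standard argument (flat base change across the Cartesian square over $X_2$, followed by the projection formula on both sides to reduce each to $p_{3*}\bigl(\pi_{12}^*\mathcal{F}_1\otimes\pi_{23}^*\mathcal{F}_2\otimes p_1^*\mathcal{G}\bigr)$). The paper itself gives no proof, stating the formula as well known, so there is nothing to diverge from; note also that you have silently (and correctly) repaired the typo in the paper's displayed formula, which writes $\pi_{12}^*\mathcal{F}_2\otimes\pi_{23}^*\mathcal{F}_2$ where $\pi_{12}^*\mathcal{F}_1\otimes\pi_{23}^*\mathcal{F}_2$ is meant. Your closing paragraph identifies the one genuinely non-formal point in this setting: since $X=T^*Y$ is not proper, the pushforwards and the projection formula are only justified because the goodness hypothesis makes all supports proper over the relevant factors, and this is exactly what is needed to see that $\mathcal{F}_1\star\mathcal{F}_2$ is again good.
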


In other words - $D^b(X\times X)$ has a monoidal structure given by the convolution product, and this monoidal category acts on the category $D^b(Coh(X))$, letting $\mathcal{F}$ act by $F_{\mathcal{F}}$.

\begin{observation}
The above claim is compatible with base change to another base ring R'. That is, let $R\rightarrow R'$, let $X_{R'}$ be the base change. Consider the map
$B:X_{R'}\times_{R'} X_{R'}\rightarrow X\times _R X$. The derived pullback of the kernel $B^*:D^bCoh(X\times _R X)\rightarrow D^bCoh(X_{R'}\times _{R'} X_{R'})$ satisfies $B^* \mathcal{F}_1\star B^*\mathcal{F}_2\simeq B^*(\mathcal{F}_1\star \mathcal{F}_2)$. In other words, the pullback functor $B^*:D^bCoh(X_R\times _R X_R)\rightarrow D^bCoh(X_{R'}\times _{R'}X_{R'})$ is a monoidal functor. The action of $D^b(X_{R',R}\times_{R',R} X_{R',R})$ on $D^b(X_{R',R})$ is compatible with this pullback map.
\bs

Let Y be a variety over R, Let $X:=T^*Y$, then the formalism of convolution is also compatible with restriction to a formal neighborhood of the zero section $Y\subset T^*Y$. 
\end{observation}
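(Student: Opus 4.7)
The plan is to unwind the convolution $\star$ on both sides of the claimed isomorphism and reduce to three standard compatibilities of the derived functor formalism: derived pullback is symmetric monoidal (so it intertwines $\otimes$), derived pullback is functorial (so pullbacks compose under composition of morphisms), and derived pushforward satisfies base change along Cartesian squares with a proper vertical map. The final sentence about formal neighborhoods then falls out of running the same argument on the formal completions.

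First I would set up the triple product base change diagram. Let $B^{(3)}: X_{R'} \times_{R'} X_{R'} \times_{R'} X_{R'} \to X \times_R X \times_R X$ be the analogue of $B$ for the three-fold product. Since fiber products commute with base change, for each $ij \in \{12,13,23\}$ the natural square with top arrow $B^{(3)}$, bottom arrow $B$, and vertical arrows $\pi_{ij}^{R'}$ (left) and $\pi_{ij}$ (right) is Cartesian. Starting from the right hand side of the desired formula,
\begin{equation}
B^*(\mathcal{F}_1 \star \mathcal{F}_2) \;=\; B^*\, \pi_{13*}\bigl(\pi_{12}^* \mathcal{F}_1 \otimes \pi_{23}^* \mathcal{F}_2\bigr),
\end{equation}
I would apply proper base change along the $ij=13$ Cartesian square to commute $B^*$ past $\pi_{13*}$, obtaining $\pi_{13*}^{R'} (B^{(3)})^* \bigl(\pi_{12}^* \mathcal{F}_1 \otimes \pi_{23}^* \mathcal{F}_2\bigr)$. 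Symmetric monoidality of $(B^{(3)})^*$ splits the tensor product, and the functoriality isomorphism $(B^{(3)})^* \pi_{ij}^* \simeq (\pi_{ij}^{R'})^* B^*$ converts each factor into $(\pi_{ij}^{R'})^* B^* \mathcal{F}_k$. Reassembling yields $\pi_{13*}^{R'}\bigl((\pi_{12}^{R'})^* B^*\mathcal{F}_1 \otimes (\pi_{23}^{R'})^* B^*\mathcal{F}_2\bigr) = B^*\mathcal{F}_1 \star B^*\mathcal{F}_2$, which is the left hand side.

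The main subtlety is verifying the hypotheses underlying the derived base change. The ``good kernel'' assumption means both $\mathcal{F}_i$ are derived pushforwards from closed subschemes on which the projections to $X$ are proper; this property persists under $B^*$, so the pushforwards on both sides are well defined and $\pi_{13}$ is proper on the support of the relevant tensor product. The second issue is the Tor-independence needed for strict derived base change; for the base change maps of interest here (e.g.\ $\mathbb{Z}[1/h] \to k$, or restriction along a flat localization or completion) this holds because $X$ is smooth over $R$ and the relevant base change diagrams are Tor-independent. For the final claim, one replaces $X \times_R X$ by the formal neighborhood of the diagonal in the zero section of $T^*Y \times_R T^*Y$, and similarly on the triple product; the projections $\pi_{ij}$ preserve these formal completions, so the same diagram chase goes through verbatim and identifies convolution on the formal neighborhood with the restriction of convolution on $X \times_R X$.
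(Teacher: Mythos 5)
The paper offers no proof of this Observation at all --- it is asserted as a standard fact about convolution of kernels --- so your proposal supplies an argument where the paper has none. Your argument is the standard one and is correct: form the three-fold base change map $B^{(3)}$, note the squares over the projections $\pi_{ij}$ are Cartesian, commute $B^*$ past $\pi_{13*}$ by base change, and use that derived pullback is symmetric monoidal and functorial. One point worth sharpening: you locate the Tor-independence in the nature of the ring map $R\to R'$ (``flat localization or completion''), but the map actually used in the paper, $\mathbb{Z}[1/h]\to \overline{F}_q$ or $\to k$, is not flat. The correct reason the square is Tor-independent for \emph{arbitrary} $R\to R'$ is the one you half-state: $X$ is smooth, hence flat, over $R$, so the projection $\pi_{13}\colon X\times_R X\times_R X\to X\times_R X$ is flat, and flatness of one leg of a Cartesian square already gives Tor-independence. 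With that correction the base change step needs no hypothesis on $R\to R'$, which is exactly what the paper requires when it later specializes to residue fields of positive characteristic. Your handling of properness on the support (via goodness of the kernels, preserved under $B^*$) and of the formal-neighborhood variant is fine.
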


These observations leads to the definition of a weak geometric action of a group on a category. As follows. \cite{BM}

\begin{defi}
A \emph{weak homomorphism} from an abstract group H to a monoidal category, is a morphism from H to the group of isomorphism classes of invertible objects. 
\end{defi}

\begin{defi}
A \emph{weak geometric action} of an abstract group H on the category $D^bCoh(X_R)$ is a weak morphism from H to $D^bCoh(X_R\times _R X_R)$. 
\end{defi}

Let $R\rightarrow R'$ be a commutative ring morphism. A weak geometric action on $D^bCoh(X_R)$ induces a weak geometric action on $D^b(X_{R'})$ by using a base change of the kernels. (That's what we saw in the case of the $Br_{aff}$ action on $D^b(coh(T^*G/B))$).

\bs

We will use the following observation, 
\begin{observation}[Relations form]
	Let X be a smooth scheme over R. Let H be a group. Let H be generated by a set S of elements $h\in S\subset H$. The relations can be described in the form 
	\begin{equation}
	\label{relation form}
	h^{\pm}_{i_1}...h^{\pm}_{i_k}=1, h^{\pm}_{i_j}\in S
	\end{equation}
	 To define a weak action of H on the category $D^b(Coh(X))$, let $\F_h:D^b(Coh(X))\rightarrow D^b(Coh(X))$ be an equivalence functor assigned to $h\in S$, which is given by a kernel $\mathcal{F}\in D^b(Coh(X\times X))$. Let $(\F_h)^{-1}:=\F_h^{-1}$ defined to be the kernel of the inverse equivalence. Then these functors define a weak action of H on $D^b(Coh(X))$ if there are isomorphisms of sheaves (in the derived category) corresponding to the relations (\ref{relation form}):
	
	\begin{equation}
	\label{relation form2}
	\F_{h^{\pm}_{i_1}}\star ....\star \F_{h^{\pm}_{i_k}}\simeq O_{\Delta}
	\end{equation}
	Where $O_{\Delta}$
	is the structure sheaf of the diagonal. 
	
\end{observation}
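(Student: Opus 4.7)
The plan is to reduce the construction of the weak action to the universal property of a group presentation. By the definition of a weak geometric action, a weak action of $H$ on $D^b(Coh(X))$ is the same datum as a group homomorphism from $H$ into the group $\mathrm{Pic}^{\star}(X\times_R X)$ of isomorphism classes of invertible objects in the monoidal category $(D^b(Coh(X\times_R X)),\star)$. The unit of this monoidal structure is $O_{\Delta}$, so checking the relations amounts to verifying that each relator word in the generators is sent to the class of $O_{\Delta}$.

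First I would verify the generators land in invertible classes. For each $h\in S$, $\F_h$ is an equivalence, so the inverse equivalence $\F_h^{-1}$ is itself given by a kernel, and the standard identity for kernels of mutually inverse equivalences gives $\F_h\star \F_h^{-1}\simeq O_{\Delta}\simeq \F_h^{-1}\star \F_h$. Hence $[\F_h]\in \mathrm{Pic}^{\star}(X\times_R X)$, and the assignment $h\mapsto [\F_h]$, $h^{-1}\mapsto [\F_h^{-1}]$ extends to a map from the free group $F(S)$ on the generating set into $\mathrm{Pic}^{\star}(X\times_R X)$ by sending a word $h_{i_1}^{\pm}\cdots h_{i_k}^{\pm}$ to the class of the convolution $\F_{h_{i_1}^{\pm}}\star\cdots\star \F_{h_{i_k}^{\pm}}$. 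Associativity of convolution (part of the monoidal structure on $D^b(Coh(X\times_R X))$) ensures this class is independent of how we parenthesize the word.

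Next I would check that this map descends to $H=F(S)/N$, where $N$ is the normal closure of the relators. By the universal property of group presentations, it suffices to show that each relator is mapped to the identity $[O_{\Delta}]$. But this is exactly the hypothesis (\ref{relation form2}): for every defining relation $h^{\pm}_{i_1}\cdots h^{\pm}_{i_k}=1$, we are given an isomorphism $\F_{h^{\pm}_{i_1}}\star\cdots\star \F_{h^{\pm}_{i_k}}\simeq O_{\Delta}$ in $D^b(Coh(X\times_R X))$. Together with the observation that $[\F_h][\F_h^{-1}]=[O_{\Delta}]$ from the previous step (so free-cancellation of adjacent $h h^{-1}$ or $h^{-1} h$ is respected), this shows the map $F(S)\to \mathrm{Pic}^{\star}(X\times_R X)$ factors through $H$.

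There is no serious obstacle here; the content lies essentially in having set up the monoidal framework so that passing to isomorphism classes trivializes all coherence issues. The main point to emphasize is that the weakness of the action (a map to $\pi_0$ of the automorphism groupoid of $D^b(Coh(X))$, rather than a genuine $\infty$-categorical action) is precisely what allows one to invoke the universal property of the presentation without producing compatible systems of higher coherences; any isomorphism at each relation suffices, and one does not need to check that these isomorphisms themselves satisfy further relations.
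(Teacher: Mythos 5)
The paper states this as an Observation and supplies no proof at all, so there is nothing to compare against line by line; your argument is the natural one and is essentially correct. You correctly identify that, by the paper's own definition, a weak geometric action is nothing more than a group homomorphism from $H$ to the group of isomorphism classes of invertible objects of the monoidal category $(D^b(Coh(X\times_R X)),\star)$ with unit $O_{\Delta}$, so the whole content reduces to the universal property of the presentation $H=F(S)/N$: send each generator to the class of its kernel, use associativity of $\star$ to evaluate words unambiguously, and check that relators go to $[O_{\Delta}]$, which is exactly hypothesis (\ref{relation form2}). Your closing remark — that passing to isomorphism classes is what kills all higher coherence obligations — is the right thing to emphasize and is the reason the paper can treat this as an observation. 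The one step you should not wave through as "standard" is the invertibility of the class $[\F_h]$ under convolution, i.e.\ $\F_h\star\F_h^{-1}\simeq O_{\Delta}$: knowing that the composite \emph{functor} is isomorphic to the identity does not formally yield an isomorphism of \emph{kernels} with $O_{\Delta}$, since kernels of a given functor need not be unique for non-projective $X$ such as $T^*G/B$. The paper sidesteps this by fiat, defining $(\F_h)^{-1}$ to be "the kernel of the inverse equivalence" and implicitly assuming the convolution relation at the kernel level; if you want a self-contained proof you should either add this as a hypothesis (it holds for the specific kernels $O_{Z_w}$ used later, by the results of Bezrukavnikov--Riche) or restrict to a setting where kernels of equivalences are unique. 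With that caveat made explicit, your proof is complete and consistent with the paper's conventions.
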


\bs

Next, Let $B_{0}\subset P_{0}$. We need to understand the pullback functor of D modules $D^b(\mathcal{D}_{\lambda}(G/P_{0})_k-mod_0)\rightarrow D^b(\mathcal{D}_{\lambda}(G/B_{0})_k-mod_0)$, at the level of the corresponding bounded derived category of coherent sheaves, \cite{BM}.

\subsection*{The pullback functor $\pi^*:D^b(Coh_0(T^*G/P_{0})_k)\rightarrow D^b(Coh_0(T^*G/B_{0})_k)$}
Working over $k$ of char $p>>0$. Let $\pi:(G/B_0)_k\rightarrow (G/P_0)_k$ be the projection.
The ordinary pullback functor at the level of derived category of D modules, $D^b(\D_{\lambda}(T^*G/P_0)_k-mod_0)\rightarrow D^b(\D_{\lambda}(T^*G/B_0)_k-mod_0)$ does not correspond to the ordinary pullback functor at the level of derived category of coherent sheaves.  Rather the corresponding pullback of coherent sheaves is:

	\subsubsection*{Definition of the special pullback functor $Coh_0 (T^*(G/P_0)_k)\rightarrow Coh_0( T^*(G/B_0)_k)$}
					
					We describe a functor $Coh(T^*G/P_0)_R\rightarrow Coh(T^*G/B_0)_R$(R as before). In the case R=k, and after restricting to $Coh_0(T^*G/P_0)_k$ this is compatible with the pullback functor of D modules.

				\begin{remark}
				To simplify notations, I remove mentioning the base ring R from the notation. 
				\end{remark}

				Let $B\subset G$ be a Borel subgroup. Let $P\supset B$ be a parabolic which contains it.
				Consider the varieties X:=G/B, Y:=G/P.
				Consider the projection $\pi:X\rightarrow Y$, a smooth surjective map. 
				Consider the morphism of vector bundles $d\pi:T^*Y\times_Y X\rightarrow T^*X$.  Since $\pi$ is smooth, d$\pi$ is a closed embedding.
				
					\begin{example}Let G=$GL_n$, Let P be a minimal parabolic, in standard form, its Levi is (n-2) boxes of size one and one box of size two by two. Consider the projection $G/B\rightarrow G/P$. We can describe the varieties and this projection in terms of moduli space of flags. 
			
					Identifying G/B with the moduli space of full flags of length n and G/P with the moduli space of partial flags, ${0\subset V_1\subset V_2\subset...V_{i-1}\subset V_{i+1}\subset...V_n}$ with $dim V_i=i$, the morphism $G/B\rightarrow G/P$ is forgetting $V_i$. The fiber is $\mathbb{P}^1$ (It corresponds to choices of a line in $V_{i+1}/V_{i-1}$). The closed embedding df is a divisor.

		\end{example}

		\begin{example}[Of df]
					Consider the case P=G. 
					
					X=G/B, Y=G/G=pt $\Rightarrow T^*Y=pt$ 
					$d\pi:X\rightarrow T^*X$ is the zero section.

			\end{example}

					Let $f: T^*(G/P)\times_{G/P}G/B\rightarrow T^*(G/P)$	
				be the projection map. $f$ is smooth and flat. 
					
				\begin{defi}\cite{BM}			
				Using the correspondence $d\pi,f$ define the special pullback functor $d\pi_*f^*:D^bCoh(T^*(G/P))\rightarrow D^bCoh(T^*(G/B))$
				
				\end{defi}
				
					Since f is flat and $d\pi$ is a closed embedding, it follows that, 
				\begin{claim} This functor is exact.
				\end{claim}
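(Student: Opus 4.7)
The plan is to verify exactness of $d\pi_* f^*$ by separately checking that each of the two constituent functors is exact at the abelian level, so that no derived functors appear and the composition is automatically $t$-exact on the bounded derived categories.

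First I would note that $f$ is flat (indeed smooth, since it is the base change of $\pi: G/B \to G/P$ along $T^*(G/P) \to G/P$, and $\pi$ is smooth). Consequently $f^*$ sends $\mathrm{Coh}(T^*(G/P))$ to $\mathrm{Coh}(T^*(G/P)\times_{G/P} G/B)$ and preserves short exact sequences, so it descends to a $t$-exact functor on the derived categories. There is nothing to derive here.

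Next I would observe that $d\pi$ is a closed embedding, because $\pi$ is smooth so the relative cotangent sequence
\begin{equation}
0 \to \pi^* \Omega^1_{G/P} \to \Omega^1_{G/B} \to \Omega^1_{G/B / G/P} \to 0
\end{equation}
is short exact and gives a fibrewise linear closed embedding of vector bundles over $G/B$. A closed embedding is affine, and pushforward along an affine morphism is exact on quasi-coherent sheaves; it also preserves coherence because $d\pi$ is a closed embedding of Noetherian schemes. Hence $(d\pi)_*$ is $t$-exact on the bounded derived categories and requires no derivation.

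Combining the two steps, $d\pi_* f^*$ is a composition of two $t$-exact functors and is therefore itself $t$-exact, which is what the claim asserts. The only mild subtlety is to check that $f$ really is flat and $d\pi$ really is a closed embedding in our specific situation, but both facts are immediate from the smoothness of $\pi$ and the construction of $d\pi$ via the cotangent sequence, so I do not expect a serious obstacle.
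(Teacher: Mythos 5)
Your proof is correct and follows exactly the route the paper intends: the paper's justification is precisely the one-line observation that $f$ is flat and $d\pi$ is a closed embedding, so that $f^*$ and $(d\pi)_*$ are each exact at the abelian level and need not be derived. You have simply spelled out the details (flatness of $f$ by base change of the smooth $\pi$, and $d\pi$ being a closed subbundle inclusion via the relative cotangent sequence), which is a faithful elaboration of the same argument.
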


				Consider the case where $R=k$ (algebraically closed field of char p $>h$).
				Restricting the domain to $D^bCoh_0(T^*(G/P_0))$, consider the functor $d\pi_*f^*:D^bCoh_0(T^*(G/P_0))\rightarrow D^bCoh_0(T^*(G/B_0))$. Under the equivalence of Coherent with D modules, the functor $d\pi_*f^*:D^bCoh_0(T^*(G/P_0))\rightarrow D^bCoh_0(T^*(G/B_0))$ corresponds to the pullback of D modules $D^b(\mathcal{D}_{\lambda}(G/P_0)-mod_0)\rightarrow D^b(\mathcal{D}_{\lambda}(G/B_0)-mod_0)$. Under the equivalence with the categories of representations, it corresponds to the functor $D^b(A_{\lambda}^0-mod)\rightarrow D^b(u_{\lambda}^0-mod)$ that is induced by the surjective morphism of algebras $u_{\lambda}^0\rightarrow A_{\lambda}^0$

		 \begin{remark}
		
		As mentioned above the affine braid group action on $D^b(Coh(T^*G/B_0))$ has another version when considering the category $D^b(Coh(\tilde{\g}))$, where $\tilde{\g}$ is the Grothendieck resolution. We can construct a local system for the Parbaolic version, $D^b(Coh(\tilde{\g}_P))$. (At the level of k points $\g^P:=((x,\mathfrak{p})| \mathfrak{p}\in G/P, x\in \mathfrak{p}$)). 
		
		 	In this version the situation is simpler. The functor between coherent sheaves that corresponds to the pullback functor of D modules is the ordinary pullback functor of coherent sheaves 
		\end{remark}
				
		\begin{remark}
		We don't discuss the case	$X=\tilde{\g}_P$ in length since it's not a symplectic resolution, just a Poisson variety. (Studying the family of quantization of a Poisson variety, may allow to generalize the picture that we suggest of a local system on a subset of the universal parameter space of quantizations to such case.)
		\end{remark}

\subsection*{Extending to coherent sheaves with no support condition}

In this subsection, R=k.

We will define a local system on $V^0_{\CC}$ with the value $D^b(Coh(T^*G/P_0))$, s.t restriction of the functors to the category $D^bCoh_0(T^*G/P_0)$ recovers the previous local system. 

Remember, the generators of the groupoid of $V^0_{\CC}$ are $l_{\lambda,\mu}$, where $\lambda,\mu$ are two weights in adjucent real alcoves that share a codimension one wall. 

In the local system for $D^bCoh_0(T^*G/P_0)$ we attached the functor $F_{\lambda,\mu,Coh_{P_0}}$ to $l_{\lambda,\mu}$.

Let $\widetilde{F_{\lambda,\mu,Coh_{P_0}}}$ denote the functor we will attach to the generator $l_{\lambda,\mu}$ in the local system for $D^bCoh(T^*G/P_0)$

\bs

When $P_0$ is a Borel $B_0$, we know a definition of $\widetilde{F_{\lambda,\mu,Coh_{B_0}}}$ by its kernel, whose restriction to the subcategory with the support condition is $F_{\lambda,\mu,Coh_{B_0}}$. The kernels live in the heart. In the abelian category $Coh(T^*G/B_0 \times T^*G/B_0)$

\bs

Let $\pi^*$ be the special pullback functor we defined above.

\begin{observation}
\label{diagram support}
The following diagram commutes. 

\[
\xymatrix
{D^b(Coh_0(T^*G/B_0))\ar[r]^-{F_{\lambda,\mu,Coh_{B_0}}} &{D^b(Coh_0(T^*G/B_0))}\\
D^b(Coh_0(T^*G/P_0))\ar[r]_{F_{\lambda,\mu,Coh_{P_0}}} \ar[u]^-{\pi^*}& {D^b(Coh_0(T^*G/P_0))} \ar[u]_{\pi^*}}
\]
	
Moreover, the functors $F_{\lambda,\mu,Coh_{P_0}}$ are uniquely characterized as functors that makes the diagram commutes. 
\end{observation}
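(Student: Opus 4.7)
The plan is to reduce the commutativity to a factor-by-factor verification on the D-module side via the equivalences of the preceding subsections, and then to deduce uniqueness from the fact that $\pi^*$ is essentially restriction of scalars along a surjection of algebras and thus admits a functorial left inverse.

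First, I would transport the whole square through the equivalences $F^{P_0}_{A_\lambda,Coh}$ and its Borel analogue. By the preceding subsection, the vertical $\pi^*=d\pi_* f^*$ corresponds on the D-module side to the ordinary D-module pullback $\pi^*_D$, and, after taking global sections, to the restriction $\pi^*_{\mathrm{alg}}$ along the surjection $u_\lambda^{\hat 0}\twoheadrightarrow A_\lambda^{\hat 0}$. The square becomes the assertion
\[
\pi^*_{\mathrm{alg}}\circ F^{P_0}_{\lambda,\mu}\;\simeq\;F^{B_0}_{\lambda,\mu}\circ \pi^*_{\mathrm{alg}},
\]
where $F^{P_0}_{\lambda,\mu}=\Gamma^{P_0}_\mu\circ(-\otimes\mathcal{O}(\mu-\lambda))\circ Loc^{P_0}_\lambda$ and analogously with $B_0$ in place of $P_0$.

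Next, I would check this factor-by-factor using three standard compatibilities. (i) $\pi^*_D\circ Loc^{P_0}_\lambda\simeq Loc^{B_0}_\lambda\circ \pi^*_{\mathrm{alg}}$: passing to right adjoints this becomes $\Gamma(G/B_0,\pi^*_D-)\simeq \Gamma(G/P_0,-)$, an instance of the projection formula combined with $R\pi_*\mathcal{O}_{G/B_0}\simeq \mathcal{O}_{G/P_0}$ (the fibers are $P_0/B_0$, and since $\lambda\in\Lambda_L$ extends to a character of $P_0$ the twist is trivial along the fibers). (ii) $\pi^*_D$ commutes with tensoring by $\mathcal{O}(\mu-\lambda)$, because $\mu-\lambda\in\Lambda_L$ so this line bundle comes from $\operatorname{Pic}(G/P_0)$ and pulls back to the same-weight line bundle on $G/B_0$. (iii) $\pi^*_{\mathrm{alg}}\circ\Gamma^{P_0}_\mu\simeq\Gamma^{B_0}_\mu\circ\pi^*_D$ by the same projection-formula argument as in (i), applied to the $\mu$-twist. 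Composing the three intertwiners yields the desired isomorphism, which translates back to commutativity of the diagram in the statement.

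For the uniqueness, I use that the restriction $\pi^*_{\mathrm{alg}}$ along a surjection of algebras is fully faithful and admits a left adjoint $L:=A_\mu^{\hat 0}\otimes_{u_\mu^{\hat 0}}-$ satisfying $L\circ\pi^*_{\mathrm{alg}}\simeq \mathrm{Id}$ (for any $A_\mu^{\hat 0}$-module $M$ the multiplication $A_\mu^{\hat 0}\otimes_{u_\mu^{\hat 0}} M\to M$ is an isomorphism). Transported back to coherent sheaves this provides a functorial retraction of $\pi^*\colon D^b(\operatorname{Coh}_0(T^*G/P_0))\to D^b(\operatorname{Coh}_0(T^*G/B_0))$. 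Hence if $F$ is any functor on $D^b(\operatorname{Coh}_0(T^*G/P_0))$ with $\pi^*F\simeq F_{\lambda,\mu,Coh_{B_0}}\pi^*$, composing with the retraction forces $F\simeq F_{\lambda,\mu,Coh_{P_0}}$.

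The main obstacle I anticipate is step (i) (and dually (iii)) in the derived setting with the formal support condition: this relies on the compatibility of the derived localization of \cite{BMR2} with pullback along the smooth proper morphism $\pi$, which is a Kempf-vanishing type statement on the fibers $P_0/B_0$ ensuring that higher direct images of $\lambda$-twisted D-modules vanish for $\lambda\in\Lambda_L$. Once this compatibility is in hand, everything else is formal.
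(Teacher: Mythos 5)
The paper states this Observation without proof (the content is implicitly delegated to \cite{BM}), so there is no in-paper argument to match yours against; I will assess your proposal on its own terms. Your treatment of the commutativity is fine and is surely the intended route: transport the square through the localization equivalences, use that the special pullback $d\pi_*f^*$ corresponds to the D-module pullback $\pi^*_D$, and reduce to $R\Gamma(G/B_0,\pi^*_D(-))\simeq R\Gamma(G/P_0,-)$ via the projection formula, $R\pi_*\mathcal{O}_{G/B_0}\simeq\mathcal{O}_{G/P_0}$, and the fact that $\mathcal{O}(\mu-\lambda)$ is pulled back from $G/P_0$ because $\lambda,\mu\in\Lambda_L$.

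The uniqueness step, however, has a genuine gap. You assert that restriction $\pi^*_{\mathrm{alg}}$ along the surjection $u_\mu^{\hat 0}\twoheadrightarrow A_\mu^{\hat 0}$ is fully faithful and that $L=A_\mu^{\hat 0}\otimes_{u_\mu^{\hat 0}}(-)$ gives a functorial retraction with $L\circ\pi^*_{\mathrm{alg}}\simeq\mathrm{Id}$. Both claims hold at the level of abelian categories but fail on the bounded derived categories, which is where the statement lives. On $D^b$ the left adjoint of restriction is the \emph{derived} tensor product, and $A\otimes^L_u\operatorname{res}(M)$ carries higher Tor terms $\operatorname{Tor}^u_i(A,M)$; your parenthetical justification (the multiplication map $A\otimes_u M\to M$ is an isomorphism) is precisely the underived statement and does not upgrade. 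Indeed $L\circ\operatorname{res}\simeq\mathrm{Id}$ on $D^b$ is equivalent to $A\otimes^L_uA\simeq A$, i.e.\ to $u\to A$ being a homological epimorphism, which is also the criterion for derived restriction along a surjection to be fully faithful --- an extra condition that is \emph{not} a formal consequence of surjectivity and that fails here. One sees the failure geometrically: under localization, $\pi^*_{\mathrm{alg}}$ matches the D-module pullback along the $P_0/B_0$-fibration $G/B_0\to G/P_0$, and $\operatorname{Ext}^\bullet(\pi^*M,\pi^*N)$ acquires a factor of $H^\bullet(P_0/B_0)$, which is not concentrated in degree $0$ once $P_0\neq B_0$; equivalently, $\pi^*=d\pi_*f^*$ pushes forward along a closed embedding of codimension $\dim P_0/B_0>0$, and such an $i_*$ is not fully faithful on $D^b(Coh)$. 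So "compose with the retraction" does not show that $\pi^*F_1\simeq\pi^*F_2$ forces $F_1\simeq F_2$. This matters: the uniqueness clause is exactly what the paper leans on later (in the proof of Proposition \ref{prop}) to identify the restriction of $\widetilde{F_{\lambda,\mu,Coh_{P_0}}}$ with $F_{\lambda,\mu,Coh_{P_0}}$, and it requires the actual arguments of \cite{BM} characterizing the parabolic functors, not a formal adjunction against $\pi^*$.
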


\begin{claim}
\label{claim}
There exists and unique functors $\widetilde{F_{\lambda,\mu,Coh_{P_0}}}$ that complete the following diagram to a commutative diagram. 

\[
\xymatrix
{D^b(Coh(T^*G/B_0))\ar[r]^-{\widetilde{F_{\lambda,\mu,Coh_{B_0}}}} &{D^b(Coh(T^*G/B_0))}\\
D^b(Coh(T^*G/P_0))\ar@{-->}[r]_{\widetilde{F_{\lambda,\mu,Coh_{P_0}}}} \ar[u]^-{\pi^*}& {D^b(Coh(T^*G/P_0))} \ar[u]_{\pi^*}}
\]
\end{claim}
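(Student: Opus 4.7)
The plan is to construct $\widetilde{F_{\lambda,\mu,Coh_{P_0}}}$ by conjugation through a right adjoint to $\pi^*$, obtain uniqueness immediately from the fully faithfulness of $\pi^*$, and reduce the commutativity to Observation \ref{diagram support} using the explicit kernel for $\widetilde{F_{\lambda,\mu,Coh_{B_0}}}$.

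First I would verify that $\pi^* = d\pi_* f^*$ is fully faithful and admits a right adjoint. The morphism $d\pi$ is a closed immersion, so $d\pi_*$ is fully faithful and admits the Grothendieck-duality right adjoint $d\pi^!$. The morphism $f$ is smooth and proper with fibers isomorphic to the partial flag variety $P_0/B_0$; since $R\Gamma(P_0/B_0, O) \simeq k$ one has $Rf_* O \simeq O$, and the projection formula gives $Rf_* \circ f^* \simeq \mathrm{Id}$. Consequently $\pi^\dagger := Rf_* \circ d\pi^!$ is right adjoint to $\pi^*$ with $\pi^\dagger \pi^* \simeq \mathrm{Id}$. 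Uniqueness of the dashed functor is then automatic: for any two candidates $G_1, G_2$ we have $\pi^* G_1 \simeq \widetilde{F_{\lambda,\mu,Coh_{B_0}}} \pi^* \simeq \pi^* G_2$, and applying $\pi^\dagger$ yields $G_1 \simeq G_2$.

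For existence, I would take $\widetilde{F_{\lambda,\mu,Coh_{P_0}}} := \pi^\dagger \circ \widetilde{F_{\lambda,\mu,Coh_{B_0}}} \circ \pi^*$. Since $\pi^* \pi^\dagger$ acts as the identity on the essential image of $\pi^*$, the desired commutativity $\pi^* \circ \widetilde{F_{\lambda,\mu,Coh_{P_0}}} \simeq \widetilde{F_{\lambda,\mu,Coh_{B_0}}} \circ \pi^*$ is equivalent to the claim that $\widetilde{F_{\lambda,\mu,Coh_{B_0}}}(\pi^* \mathcal{G})$ lies in the essential image of $\pi^*$ for every $\mathcal{G} \in D^b(Coh(T^*G/P_0))$. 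This essential-image stability is the crux.

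The main obstacle is verifying this essential-image condition. My approach would be to exploit the explicit kernel: $\widetilde{F_{\lambda,\mu,Coh_{B_0}}}$ is given by the structure sheaf $O_{Z'_{w,R}}$ of a closed subscheme of $T^*(G/B_0) \times T^*(G/B_0)$ from the Bezrukavnikov-Riche construction, and its formation commutes with base change and with restriction to the formal neighborhood of the zero section. Observation \ref{diagram support} already supplies the essential-image stability after restriction to $D^b(Coh_0)$. Because $\pi^*$, $\pi^\dagger$, and the convolution with $O_{Z'_{w,R}}$ are all compatible with formal completion along the zero section, the stability of the essential image on the formal neighborhood upgrades to the global statement via a kernel-level comparison on $T^*(G/B_0) \times T^*(G/B_0)$. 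Combining this with the adjoint construction above gives both existence and uniqueness of $\widetilde{F_{\lambda,\mu,Coh_{P_0}}}$.
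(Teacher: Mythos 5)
The paper states this claim without giving any argument (it is implicitly imported from \cite{BM}), so there is no in-paper proof to compare against; judging your proposal on its own terms, it contains a genuine gap at its very first step. You assert that $\pi^*=d\pi_*f^*$ is fully faithful because ``$d\pi$ is a closed immersion, so $d\pi_*$ is fully faithful,'' and you build both uniqueness and existence on the resulting splitting $\pi^\dagger\pi^*\simeq \mathrm{Id}$ with $\pi^\dagger=Rf_*\circ d\pi^!$. But derived pushforward along a closed immersion $i:Z\hookrightarrow X$ is \emph{not} fully faithful: $i^!i_*N$ (respectively $Li^*i_*N$) has cohomology $N\otimes\wedge^{m}N_{Z/X}$ in degree $m$ (respectively $N\otimes\wedge^{m}N_{Z/X}^{\vee}$ in degree $-m$) for $0\le m\le \mathrm{codim}$, not just $N$ in degree $0$. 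In the situation at hand the normal bundle of $d\pi$ is the pullback of $\Omega^1_{(G/B_0)/(G/P_0)}$, so
\[
Rf_*\,d\pi^{!}\,d\pi_*\,f^*\mathcal{G}
\]
acquires, besides $\mathcal{G}$, nonzero contributions of the form $\mathcal{G}\otimes R^{j}f_*\bigl(\wedge^{j}\Omega^1_{(G/B_0)/(G/P_0)}\bigr)[-2j]$ governed by the Hodge cohomology $H^{j}(P_0/B_0,\Omega^{j})\neq 0$ of the fibre $P_0/B_0$ (already $j=1$ is nonzero since $\mathrm{Pic}(P_0/B_0)\neq 0$). Hence $\pi^\dagger\pi^*\not\simeq\mathrm{Id}$, the functor $\pi^*$ is not fully faithful, your uniqueness argument (``apply $\pi^\dagger$'') collapses, and the candidate $\pi^\dagger\circ\widetilde{F_{\lambda,\mu,Coh_{B_0}}}\circ\pi^*$ is not known to make the square commute even if the essential image of $\pi^*$ were preserved.

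The remaining step is also only gestured at: the ``essential-image stability'' and its upgrade from the formal neighborhood to all of $T^*G/B_0$ is exactly the mathematical content of the claim, and saying it follows ``via a kernel-level comparison'' defers the whole point. The argument that actually works is carried out at the level of kernels from the start: one exhibits an explicit object of $D^b(Coh(T^*G/P_0\times T^*G/P_0))$ (the parabolic analogue of $O_{Z'_{w}}$) together with an isomorphism of kernels intertwining it with $O_{Z'_w}$ through the kernel of $d\pi_*f^*$, and deduces uniqueness not from adjunction but from the support-condition characterization of Observation \ref{diagram support} combined with a specialization/flatness argument of the type used in the lemma proved for the next proposition. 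I would encourage you to either reproduce that kernel computation or cite it precisely, rather than routing the proof through a fully-faithfulness statement that is false.
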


\begin{prop}
\label{prop}
The functors $\widetilde{F_{\lambda,\mu,Coh_{P_0}}}$ defined above, define a local system on $V^0_{\CC}$ with values the category $D^bCoh(T^*G/P_0)$
\end{prop}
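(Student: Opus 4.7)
The plan is to verify that the assignment $l_{\lambda,\mu}\mapsto \widetilde{F_{\lambda,\mu,Coh_{P_0}}}$ respects the Salvetti relations for the groupoid of $V^0_{\CC}$, by lifting each such relation along $\pi^*$ to the Borel situation (where the relation is already known from the Bezrukavnikov-Riche affine braid group action on $D^b(Coh(T^*G/B_0))$) and then descending the resulting isomorphism using the uniqueness part of Claim \ref{claim}.

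More concretely, first I would fix an arbitrary Salvetti relation, i.e.\ a word $l_{\lambda_1,\mu_1}^{\epsilon_1}\cdots l_{\lambda_k,\mu_k}^{\epsilon_k}$ in the Salvetti generators representing the trivial loop. Set
\[
C_P := \widetilde{F_{\lambda_1,\mu_1,Coh_{P_0}}}^{\epsilon_1}\circ\cdots\circ \widetilde{F_{\lambda_k,\mu_k,Coh_{P_0}}}^{\epsilon_k},
\]
and let $C_B$ denote the analogous composition of the Borel functors. The goal is $C_P\simeq \mathrm{Id}$. Since the commuting square of Claim \ref{claim} is compatible with composition, stacking these squares yields a canonical isomorphism $\pi^*\circ C_P\simeq C_B\circ \pi^*$. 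The kernels defining $\widetilde{F_{\lambda,\mu,Coh_{B_0}}}$ are exactly those of the Bezrukavnikov-Riche weak geometric action of $Br_{aff}$ on $D^b(Coh(T^*G/B_0))$, and $\pi_1(V^0_{\CC})\simeq Br_{aff,pure}\subset Br_{aff}$, so the Salvetti relation holds on the Borel side, giving $C_B\simeq \mathrm{Id}$. Hence $\pi^*\circ C_P\simeq \pi^*\circ \mathrm{Id}$, and applying the uniqueness assertion of Claim \ref{claim} (with top arrow $\mathrm{Id}_{D^b(Coh(T^*G/B_0))}$, which is trivially completed by $\mathrm{Id}_{D^b(Coh(T^*G/P_0))}$) forces $C_P\simeq \mathrm{Id}$.

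The main obstacle is really the uniqueness built into Claim \ref{claim}, which is where all the content lives. One needs the special pullback $\pi^*=d\pi_*f^*$ to be sufficiently conservative so that an endofunctor of $D^b(Coh(T^*G/P_0))$ is determined up to isomorphism by a commutative square with its Borel counterpart. Since $f$ is a smooth $P_0/B_0$-fibration with a natural retraction on the correct weight piece (via a Borel-Weil type pushforward) and $d\pi$ is a closed embedding, one should be able to reconstruct the parabolic kernel from the Borel one by a descent argument on kernels in $D^b(Coh(T^*G/P_0\times T^*G/P_0))$; this simultaneously makes Claim \ref{claim} precise and, applied to the situation above, exhibits the kernel of $C_P$ as the descent of the Borel kernel $O_\Delta$, hence isomorphic to $O_\Delta$ on the parabolic side, which is exactly the relation form (\ref{relation form2}) defining the local system.
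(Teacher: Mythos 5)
Your argument is essentially correct, but it takes a genuinely different route from the paper's. The paper never passes through the Borel case at the level of the full categories: instead it verifies the kernel relations (\ref{relation form2}) directly for the parabolic kernels by invoking the lemma (from \cite{BM}) that an isomorphism of kernels in $D^b(Coh(T^*X\times T^*X))$ holds iff it holds after restriction to the formal neighborhood of the zero section, then identifies the restricted functors with the $F_{\lambda,\mu,Coh_{P_0}}$ (using the uniqueness part of Observation \ref{diagram support}), which are already known to form a local system from the characteristic-$p$ representation-theoretic construction. You instead stack the squares of Claim \ref{claim}, use $C_B\simeq \mathrm{Id}$ on the Borel side (available from the \cite{BR} braid relations together with the translation to Salvetti relations in \cite{B1}), and descend along $\pi^*$. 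What the paper's route buys is twofold: it works at the level of kernels, which is exactly the form needed for the base-change lemma in the subsequent characteristic-zero lifting, and it only uses the uniqueness statement in the support-condition setting, where the D-module interpretation makes it transparent. What your route buys is that it bypasses the formal-neighborhood lemma entirely and treats the Borel and parabolic cases uniformly through one conservativity statement. The one point you should make explicit: the uniqueness in Claim \ref{claim} is literally asserted only for diagrams whose top arrow is $\widetilde{F_{\lambda,\mu,Coh_{B_0}}}$, whereas you apply it with $\mathrm{Id}_{D^b(Coh(T^*G/B_0))}$ on top; you therefore need the stronger statement that $\pi^*G\simeq \pi^*$ forces $G\simeq \mathrm{Id}$ (equivalently, that the kernel transfer along $\pi$ is conservative). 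Your closing paragraph correctly identifies this as the real content and sketches a plausible descent mechanism, but until that is established your proof has a dependency that the paper's proof does not.
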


\begin{proof}
We need to check that the relations of the groupoid hold. These relations are described in terms of equations on the kernels of the form, equation (\ref{relation form2}). Here we use a lemma 

\begin{lemma}
Let X be a smooth projective variety over k that admits a projective map to an affine variety.
	A relation of the form of equation (\ref{relation form2}) for sheaves in $D^b(Coh(T^*X\times T^*X))$, holds iff it holds after passing to a formal neighborhood of the zero section $X\subset T^*X$. (\cite{BM})
\end{lemma}

Using the lemma, it's enough to prove equation (\ref{relation form2}) for the restriction of the kernels to the formal neighborhood of the zero section. This restriction of the kernels corresponds to restricting the functors to the subcategory $D^bCoh_0(T^*X)$.
Restriction of the functors $\widetilde{F}_{\lambda,\mu,Coh_{P_0}}$are the functors $F_{\lambda,\mu,Coh_{P_0}}$. (Since we know this holds for $\widetilde{F_{\lambda,\mu,Coh_{B_0}}}$ and by the uniqueness part in observation (\ref{diagram support})). The restricted functors do form a local system, hence equation (\ref{relation form2}) is satisfied for their kernels. 

\end{proof}

\subsection*{Extending to case of char 0}

Let G be over a $R:=\Z[1/h]$. Let $B_0\subset P_0$ be the fixed borel and fixed parabolic. We will define a local system on $V_{\CC}^0$ with value the category $D^bCoh(T^*G/P_0)$, s.t base changing the functors to the category $D^b(Coh(T^*G/P_0))_k$, recovers the previous local system. 

Remember the generators of the groupoid are $l_{\lambda,\mu}$ as before. Let $\overline{{F}_{\lambda,\mu,Coh_{P_0}}}$ denote the functors we will attach to $l_{\lambda,\mu}$ in the local system for the category $D^bCoh(T^*G/P_0)$.

Consider the case $P_0=B_0$ a Borel. 
In this case we already explained the existence of the functors $\overline{F}_{\lambda,\mu,Coh_{B_0}}$, defined by their kernel, that form a local system, and whose base extension to k, recovers the the local system on $V^0_{\CC}$ with value $D^bCoh(T^*G/B_0)_k$.

\begin{claim}
\label{claim2}
There exists and unique functors $\overline{F}_{\lambda,\mu,Coh_{P_0}}:D^b(Coh(T^*G/P_0))\rightarrow D^b(Coh(T^*G/P_0))$ that make the following diagram commute.
\[
\xymatrix
{D^b(Coh(T^*G/B_0))\ar[r]^-{\overline{F_{\lambda,\mu,Coh_{B_0}}}} &{D^b(Coh(T^*G/B_0))}\\
D^b(Coh(T^*G/P_0))\ar@{-->}[r]_{\overline{F_{\lambda,\mu,Coh_{P_0}}}} \ar[u]^-{\pi^*}& {D^b(Coh(T^*G/P_0))} \ar[u]_{\pi^*}}
\]

\end{claim}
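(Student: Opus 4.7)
The plan is to mirror the strategy of Claim \ref{claim}, but now work integrally over $R = \Z[1/h]$ instead of over $k$. The top horizontal arrow $\overline{F_{\lambda,\mu,Coh_{B_0}}}$ is a Fourier--Mukai functor whose kernel on $T^*(G/B_0)\times_R T^*(G/B_0)$ is constructed by Bezrukavnikov--Riche integrally over $R$ (as the structure sheaf of $Z'_{w,R}$ for the $T_w$ components, twisted by line bundles for the $\theta_x$ components), and the special pullback $\pi^* = d\pi_* f^*$ is defined term-by-term over $R$. The task is therefore to produce a Fourier--Mukai kernel $\overline{\mathcal{K}}^{P_0}_{\lambda,\mu}$ on $T^*(G/P_0)\times_R T^*(G/P_0)$ whose transform fits into the square, and to see this lift is unique up to isomorphism.

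For uniqueness, I would show that the special pullback $\pi^*: D^b(Coh(T^*G/P_0))_R \to D^b(Coh(T^*G/B_0))_R$ is faithful. Since $f$ is smooth and surjective (the map $T^*(G/P_0)\times_{G/P_0}(G/B_0)\to T^*(G/P_0)$ is a $P_0/B_0$-bundle), $f^*$ is faithful; since $d\pi$ is a closed embedding, $d\pi_*$ is fully faithful; composing gives that $\pi^*$ is faithful. Then if $F_1, F_2$ both complete the square, the natural isomorphism $\pi^*F_1 \simeq \overline{F_{\lambda,\mu,Coh_{B_0}}}\pi^* \simeq \pi^*F_2$ descends to $F_1 \simeq F_2$.

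For existence, I would first appeal to Claim \ref{claim}: over $k$, the functor $\widetilde{F_{\lambda,\mu,Coh_{P_0}}}$ is defined and uniquely characterized, and its kernel is a specific sheaf in the heart on $T^*(G/P_0)_k \times T^*(G/P_0)_k$. I would then construct an $R$-integral kernel $\overline{\mathcal{K}}^{P_0}_{\lambda,\mu}$ whose base change to $k$ recovers this kernel --- a natural candidate is a convolution/descent of $\overline{\mathcal{K}}^{B_0}_{\lambda,\mu}$ along the product map $T^*(G/B_0)\times T^*(G/B_0) \to T^*(G/P_0)\times T^*(G/P_0)$ using the pushforward--pullback structure attached to $\pi \times \pi$. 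Define $\overline{F_{\lambda,\mu,Coh_{P_0}}}$ as the Fourier--Mukai functor with this kernel.

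The main obstacle will be verifying the commutativity of the outer square integrally over $R$, not merely after base change to $k$. Reducing to Claim \ref{claim} via $\otimes_R k$ gives the desired isomorphism of functors after specialization. To upgrade this to an isomorphism over $R$, I would use that $R = \Z[1/h]$ is regular, that the Bezrukavnikov--Riche kernels are flat over $R$ with the explicit description recalled above, and that $\pi^* = d\pi_* f^*$ commutes with the Fourier--Mukai convolution up to a projection formula identity (using flatness of $f$ and that $d\pi$ is a regular closed embedding). Once these flatness and base-change statements are in place, a Nakayama-type descent over $R$ upgrades the $k$-commutativity to $R$-commutativity, and combined with the faithfulness argument of the previous paragraph yields both existence and uniqueness.
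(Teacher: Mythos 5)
The paper states Claim~\ref{claim2} without proof (as it does the parallel Claim~\ref{claim}; the real content is implicitly deferred to \cite{BM}), so there is no argument in the text to compare yours against, and I can only assess the proposal on its own terms. Its overall architecture --- work with Fourier--Mukai kernels over $R=\Z[1/h]$, use flatness and base change to $k$ to import the characteristic-$p$ statement --- is reasonable and consistent with how the paper proves the proposition that follows the claim.

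However, the uniqueness step has a genuine gap. You argue that $\pi^*=d\pi_* f^*$ is faithful and that therefore a natural isomorphism $\pi^* F_1\simeq \pi^* F_2$ ``descends'' to $F_1\simeq F_2$. Faithfulness is not enough: it says that a morphism $F_1M\to F_2M$ inducing the given isomorphism $\pi^*F_1M\to \pi^*F_2M$ is unique \emph{if it exists}, but it gives no way to produce one, since the isomorphism need not lie in the image of $\pi^*$ on Hom-sets. For that you need fullness (or a splitting of the unit $\mathrm{id}\to \pi_*\pi^*$ of an adjunction), and $\pi^*$ is not full: while $f^*$ is indeed fully faithful (the fibres of $f$ are the flag varieties $P_0/B_0$ of the Levi, with $H^\bullet(P_0/B_0,O)=R$), the pushforward $d\pi_*$ along a nontrivial closed embedding is \emph{not} fully faithful on the bounded derived category --- $d\pi^*d\pi_*(-)$ acquires the extra filtration pieces $(-)\otimes\wedge^j N^\vee[j]$ from the conormal bundle, so Ext-groups grow --- and even its faithfulness requires an argument. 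A related problem affects existence: ``a convolution/descent of the $B_0$-kernel along $\pi\times\pi$'' is not yet a construction, and the obvious candidate $\pi_*\circ\overline{F_{\lambda,\mu,Coh_{B_0}}}\circ\pi^*$ does not make the square commute precisely because $\pi_*\pi^*\not\simeq\mathrm{id}$. What is actually needed, and what \cite{BM} supplies, is the statement that for the walls relevant to $P_0$ the $B_0$-kernels are themselves obtained from kernels on (a correspondence over) $T^*G/P_0\times T^*G/P_0$; without that input both halves of your argument remain open.
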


\bs

\begin{prop}
Attaching these functors to the generators $l_{\lambda,\mu}$ defines a local system on $V^0_{\CC}$ with the value $D^b(CohT^*G/P_0)$, whose base change to k, is $\widetilde{F_{\lambda,\mu,Coh_{P_0}}}$
\end{prop}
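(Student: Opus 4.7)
The plan is to mirror the proof of Proposition~\ref{prop}, with restriction to the formal neighborhood of the zero section replaced by base change along $R \to k$. Two things need checking: (a) the functors $\overline{F_{\lambda,\mu,Coh_{P_0}}}$ satisfy the Salvetti relations on $V^0_{\CC}$, so they really assemble into a local system, and (b) their derived base change to $k$ recovers $\widetilde{F_{\lambda,\mu,Coh_{P_0}}}$.

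For (a), the groupoid relations have the form (\ref{relation form}) and translate into kernel identities (\ref{relation form2}) in $D^b(Coh(T^*G/P_0\times_R T^*G/P_0))$. In the $B_0$ picture these identities are exactly the relations of the affine braid group action of \cite{BR}, \cite{R}, and already hold over $R=\Z[1/h]$. To transfer them to the $P_0$ picture, I would take a composition of $\overline{F_{\lambda,\mu,Coh_{P_0}}}$'s along a relation and apply $\pi^*$; iterating the commutativity square of Claim~\ref{claim2} shows this equals the corresponding composition of $\overline{F_{\lambda,\mu,Coh_{B_0}}}$'s post-composed with $\pi^*$, which by the $B_0$ relations equals $\pi^*$ itself. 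The uniqueness portion of Claim~\ref{claim2}, applied with the top functor taken to be $\mathrm{Id}$, then forces the composition on the $P_0$ side to be isomorphic to $\mathrm{Id}$.

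For (b), both $\widetilde{F_{\lambda,\mu,Coh_{P_0}}}$ and the base change of $\overline{F_{\lambda,\mu,Coh_{P_0}}}$ to $k$ sit above $\widetilde{F_{\lambda,\mu,Coh_{B_0}}}$ in the square of Claim~\ref{claim}: the first by construction, the second because $\pi^*$ commutes with base change and the base change of $\overline{F_{\lambda,\mu,Coh_{B_0}}}$ is $\widetilde{F_{\lambda,\mu,Coh_{B_0}}}$ (this is the input on the $B_0$ side). Uniqueness in Claim~\ref{claim} then identifies them.

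The main obstacle is the uniqueness needed for (a). Claim~\ref{claim2} only asserts that a particular $B_0$-functor admits a unique $P_0$-lift, whereas the argument above also needs to apply to the identity functor, and more generally to arbitrary compositions of our functors. One therefore needs to extract from its proof the stronger statement that $\pi^*$ is conservative enough on kernels in $D^b(Coh(T^*G/P_0\times T^*G/P_0))$ to detect isomorphism. Concretely this should reduce, via the projection formula, to the vanishing $f_*O_{T^*(G/P_0)\times_{G/P_0} G/B_0}\simeq O_{T^*(G/P_0)}$ on the $\mathbb{P}^1$-fibers of $f$, which is the key geometric input already used in the proofs of Claims~\ref{claim} and~\ref{claim2}; once it is formulated at the level of arbitrary kernels, both (a) and the char $p$ analogue in Proposition~\ref{prop} fall out uniformly.
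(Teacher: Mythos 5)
Your part (b) --- identifying the base change of $\overline{F_{\lambda,\mu,Coh_{P_0}}}$ to $k$ with $\widetilde{F_{\lambda,\mu,Coh_{P_0}}}$ by placing both above $\widetilde{F_{\lambda,\mu,Coh_{B_0}}}$ and invoking the uniqueness of Claim \ref{claim} --- is exactly the paper's argument. Part (a), however, has a genuine gap that goes beyond the conservativity issue you flag at the end. Your plan is to push a $P_0$-relation up to the $B_0$ level through the square of Claim \ref{claim2} and then appeal to ``the $B_0$ relations'' to conclude that the resulting composition of the $\overline{F_{\lambda,\mu,Coh_{B_0}}}$'s is the identity. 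But the only relations known over $R=\Z[1/h]$ on the $B_0$ side are the affine braid relations of \cite{BR}, and a relation in the fundamental groupoid of $V^0_{\CC}$ for the \emph{parabolic} arrangement does not correspond to a trivial word in $Br_{aff}$. The paper already exhibits this phenomenon for a single generator: when $\lambda<_{P_0}\mu$ the bottom functor is isomorphic to the identity while the top functor $F_w$ (for the $w$ with $\mu=w\lambda$) is a nontrivial braid group element satisfying only $F_w\circ\pi^*\simeq\pi^*$. So for a $P_0$-groupoid relation the corresponding word of $B_0$-functors is in general a nontrivial element of $Br_{aff}$ that merely fixes the essential image of $\pi^*$; that it does fix this image is essentially equivalent to the statement you are trying to prove, so the argument is circular over $R$. (And even if the word were trivial, you would still need the kernel-level conservativity of $\pi^*$ that you mention, which Claim \ref{claim2} as stated does not supply.)

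The paper takes a different route for (a): it does not compare with the $B_0$ picture over $R$ at all, but reduces directly to the already-established characteristic-$p$ statement. The key tool is the arithmetic lemma stated in its proof: a $\mathbb{G}_m$-equivariant kernel $\mathcal{F}\in D^bCoh^{\mathbb{G}_m}(X\times_R X)$ is isomorphic to $O_{\Delta_{X_R}}$ as soon as its derived base change to $\overline{F_q}$ is isomorphic to $O_{\Delta_{X_{\overline{F_q}}}}$ for every prime $q$ not invertible in $R$ (a flatness/degree-concentration statement for objects of the derived category of $R$-modules). This reduces the relations (\ref{relation form2}) over $R$ to the same relations over $\overline{F_q}$, where they hold by Proposition \ref{prop}, whose proof in turn reduces to the support-condition case and ultimately to the representation-theoretic input of \cite{B1}. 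To salvage your approach one would have to prove over $R$ that the relevant nontrivial words in $Br_{aff}$ act trivially on the essential image of $\pi^*$, and no such statement is available without some version of the paper's reduction to characteristic $p$.
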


\begin{proof}
For the second statement in the proposition, look at the base extension to k of the diagram in claim \ref{claim2}. Since $\overline{F_{\lambda,\mu,CohB_0}}_k\simeq \widetilde{F_{\lambda,\mu,CohB_0}}$ and by the uniqueness of claim \ref{claim}, it follows that $\overline{F_{\lambda,\mu,CohP_0}}_k\simeq \widetilde{F_{\lambda,\mu,CohP_0}}$.

\bs

For the first statement, to prove the functors $\overline{F_{\lambda,\mu,Coh_{P_0}}}$ satisfy the relations of a local system, we need to prove that the equations of the form , equation (\ref{relation form2}), on the convolution of the kernels of $\overline{F_{\lambda,\mu,Coh P_0}}$ hold. For this we need a lemma.

\begin{lemma}
 Let R be a finite localization of $\Z$. Let X=$T^*G/B$  Where X  is considered as a variety over R. Let  $\Delta_{X}\subset X\times_R X$ denote the diagonal and let $\Delta_{X,\bar{F_q}}$ denote the base extension of this diagonal to $\bar{F_q}$. q a prime number. 

Let $\mathcal{F}\in D^bCoh^{\mathbb{G}_m}(X\times_R X)$.
If for any prime q, which is not invertible in R, the base extension of $\mathcal{F}$ to $\overline{F_p}$  (in the derived sense), satisfies the relation $\mathcal{F}\otimes_R \overline{F_q}\simeq O_{\Delta_{X_{\overline{F_q}}}}$
Then $\mathcal{F}\simeq O_{\Delta_{X_R}}$

\end{lemma}

Let A be a finitely generated flat R algebra. Let $A_q:=A\otimes_R \overline{F_q}$. The lemma above follows from the next claim about A modules and their derived extension to $\overline{F_q}$ for q which is prime in $\Z$, which is not invertible in R.(see \cite{BR})

\begin{claim} Let M be an object in the bounded derived category of A modules. If the derived extension of of $M$ to $\overline{F_q}$ is concentrated in degree 0 for every prime q not invertible in R, then M is flat over R and concentrated in degree 0. If the derived extension is zero for every q as above, then M is zero.
\end{claim}

Using the lemma, we reduce to proving the relations (\ref{relation form2}) holds after base extension to $\bar{F_q}$ for various primes q, not invertible in R. After base extension this becomes relations for the kernels of $\widetilde{F_{\lambda,\mu,Coh_{P_0}}}_{\bar{F_q}}$. By proposition \ref{prop} these functors form a local system hence their kernels satisfy the relations.

\end{proof}

\begin{remark}[An application of the local system]
Let X be a symplectic resolution over k.
Recall that $V_{\R}$ is a parameter space for quantization of X. These quantizations give rise to natural t structures on the category of coherent sheaves $D^b(Coh(X))$. In particular, in the area where localization theorem holds $V^0_{\R}$, there is a t structure attached to each alcove. It's possible to describe the changes in the t structures when crossing a wall, using the local system above, and a variant of Lusztig a-function for the Weyl group. We discuss that in \cite{B2} for the case $X=T^*G/P$. 
\end{remark}

\end{document}